\newcommand{\argmin}{\mathop{\mathrm{argmin}}}
\newtheorem{theorem}{Theorem}
\newtheorem{lemma}[theorem]{Lemma}
\theoremstyle{remark}
\theoremstyle{definition}
\newtheorem{example}{Example}
\def\E{\mathbb{E}}
\def\P{\mathbb{P}}
\def\one{\mathds{1}}
\def\half{\frac{1}{2}}
\def\row{\mathrm{row}}
\def\nuli{\mathrm{nullity}}
\def\spa{\mathrm{span}}
\def\htheta{\hat{\theta}}
\def\ttheta{\tilde{\theta}}
\def\R{\mathbb{R}}
\def\cB{\mathcal{B}}
\def\cI{\mathcal{I}}
\def\cP{\mathcal{P}}
\def\cS{\mathcal{S}}
\def\BV{\mathrm{BV}}
\def\BD{\mathrm{BD}}
\def\PC{\mathrm{PC}}
\DeclarePairedDelimiter\ceil{\lceil}{\rceil}
\DeclarePairedDelimiter\floor{\lfloor}{\rfloor}
\begin{document}

\title{The DFS Fused Lasso: Linear-Time Denoising over General Graphs} 

\author{
Oscar Hernan Madrid Padilla$^{1}$ \\
{\tt oscar.madrid@utexas.edu} \\
\and
James Sharpnack$^{2}$ \\
{\tt jsharpna@gmail.com} \\ 
\and
James Scott$^{1,3}$ \\
{\tt james.scott@mccombs.utexas.edu} \\
\and
Ryan J. Tibshirani$^{4,5}$ \\
{\tt ryantibs@stat.cmu.edu} \\ 
\and
\begin{small}
\begin{tabular}{c}
  $^{1}$ Department of Statistics and Data Sciences, University of
  Texas, Austin, TX 78712 \\ 
  $^{2}$ Department of Statistics, University of California at Davis,
  Davis, CA 95616 \\  
  $^{3}$ McCombs School of Business, University of Texas, Austin, TX
  78712 \\ 
  $^{4}$ Department of Statistics, Carnegie Mellon University,
  Pittsburgh, PA 15213 \\
  $^{5}$ Machine Learning Department, Carnegie Mellon University,
  Pittsburgh, PA 15213
  \end{tabular}
\end{small}
}

\date{}

\maketitle

\begin{abstract}
The fused lasso, also known as (anisotropic) total variation
denoising, is widely used for piecewise constant signal estimation
with respect to a given undirected graph. The fused lasso estimate is
highly nontrivial to compute when the underlying graph is large and
has an arbitrary structure.  But for a special graph structure,
namely, the chain graph, the fused lasso---or simply, 1d fused
lasso---can be computed in linear time. In this paper, we establish a
surprising connection between    
the total variation of a generic signal defined over an arbitrary
graph, and the total variation of this signal over a chain graph
induced by running depth-first search (DFS) over the nodes of the   
graph. Specifically, we prove that for any signal, its total variation over
the induced chain graph is no more than twice its total variation over
the original graph. This connection leads to several interesting
theoretical and computational conclusions.  Denoting by $m$ and $n$
the number of edges and nodes, respectively, of the graph in question,
our result implies that for an underlying signal with total variation
$t$ over the graph, the fused lasso achieves a mean squared error rate
of  \smash{$t^{2/3} n^{-2/3}$}. Moreover, precisely
the same mean squared error rate is achieved by running the 1d
fused lasso on the induced chain graph from running DFS. Importantly,
the latter estimator is simple and computationally cheap,
requiring only $O(m)$ operations for constructing the DFS-induced chain
and $O(n)$ operations for computing the 1d fused lasso solution
over this chain.  Further, for trees that have bounded max degree, the
error rate of \smash{$t^{2/3} n^{-2/3}$} cannot be improved,
in the sense that it is the minimax rate for 
signals that have total variation $t$ over the tree. Finally, we
establish several related results---for example, a similar result
for a roughness measure defined by the $\ell_0$ norm of differences
across edges in place of the the total variation metric.

\bigskip
\noindent
\textbf{Keywords: fused lasso, total variation denoising, 
  graph denoising, depth-first search} 
\end{abstract}

\section{Introduction}

We study the graph denoising problem, i.e., estimation of a signal
$\theta_0 \in \R^n$ from noisy data 
\begin{equation}
\label{eq:model}
y_i = \theta_{0,i} + \epsilon_i, \quad i=1,\ldots,n,
\end{equation}
when the components of $\theta_0$ are
associated with the vertices of an undirected, connected
graph $G = (V,E)$.   Without a loss of generality, we denote   
$V=\{1,\ldots,n\}$. Versions of this problem arise in diverse  
areas of science and engineering, such as gene expression analysis,
protein mass spectrometry, and image denoising.  The problem  
is also archetypal of numerous internet-scale machine learning  
tasks that involve propagating labels or information across edges
in a network (e.g., a network of users, web pages, or YouTube videos).    

Methods for graph denoising have been studied extensively in
machine learning and signal processing. 
In machine learning, graph kernels have been proposed for
classification and regression, in both supervised and semi-supervised
data settings (e.g.,
\citet{belkin2002using,smola2003kernels,zhu2003semisupervised,
zhou2005learning}). 
In signal processing, a considerable focus has been placed on the 
construction of wavelets over graphs (e.g., 
\cite{crovella2003graph,coifman2006diffusion,gavish2010multiscale,
hammond2011wavelets,sharpnack2013detecting,shuman2013emerging}).   
We will focus our study on the {\it fused lasso} over graphs, also
known as (anisotropic) {\it total variation} denoising over
graphs.  
Proposed by \citet{rudin1992nonlinear} in the signal processing
literature, and \citet{tibshirani2005sparsity} in the statistics
literature, the fused lasso estimate is defined by the solution of a
convex optimization problem,  
\begin{equation}
\label{eq:fused_lasso}
\htheta_G = \argmin_{\theta \in \R^n} \; 
\half \|y - \theta\|_2^2 + \lambda \| \nabla_G \theta\|_1, 
 \end{equation}
where $y=(y_1,\ldots,y_n) \in \R^n$ the vector of observed data,
$\lambda \geq 0$ is a tuning parameter, and $\nabla_G \in \R^{m \times
  n}$ is the edge incidence matrix of the graph $G$.  Note that the
subscript on the incidence matrix $\nabla_G$ and the fused lasso
solution \smash{$\htheta_G$} in \eqref{eq:fused_lasso} emphasize that 
these quantities are defined with respect to the graph $G$.
The edge incidence matrix $\nabla_G$ can be defined
as follows, using some notation and terminology from algebraic graph 
theory (e.g., \cite{godsil2013algebraic}). First, we assign an
arbitrary orientation to edges in the graph, i.e., for each edge $e
\in E$, we arbitrarily select one of the two joined vertices to be the
head, denoted $e^+$, and the other to be the tail, denoted
$e^-$. Then, we define a row $(\nabla_G)_e$ of $\nabla_G$,
corresponding to the edge $e$, by   
\begin{equation*}
(\nabla_G)_{e,e^+}=1, \;\; (\nabla_G)_{e,e^-}=-1, \;\;
(\nabla_G)_{e,v}=0 \;\,\text{for all $v \not= e^+,e^-$},
\end{equation*}
for each $e \in E$.  Hence, for an arbitrary $\theta \in \R^n$, we
have
\begin{equation*}
\|\nabla_G \theta\|_1 = \sum_{e \in E} |\theta_{e^+} - \theta_{e^-}|.
\end{equation*}
We can see that the particular choice of orientation does not
affect the value \smash{$\|\nabla_G \theta\|_1$}, which we refer to as 
the {\it total variation} of $\theta$ over the graph $G$. 

\subsection{Summary of results}
\label{sec:summary}

We will wait until Section \ref{sec:related} to give a detailed 
review of literature, both computational and theoretical, on
the fused lasso. Here we simply highlight a key
computational aspect of the fused lasso to motivate the main results
in our paper.  The fused lasso solution in \eqref{eq:fused_lasso}, for
a graph $G$ of arbitrary structure, is highly nontrivial to compute.
For a chain graph, however, the fused lasso solution can be computed
in linear time (e.g., using dynamic programming or specialized 
taut-string methods).  

The question we answer is: how can we use this fact to our
advantage, when seeking to solve \eqref{eq:fused_lasso} over an
arbitrary graph?  Given a generic graph structure $G$ that has $m$
edges and $n$ nodes, it is obvious that we can define a chain graph by
running depth-first search (DFS) over the nodes.  Far less obvious is
that, for any signal, its total variation over the DFS-induced chain
graph never exceeds twice its total variation over the original graph.
This fact, which we prove, has the following three notable
consequences (the first being computational, and next two
statistical).   

\begin{enumerate}
\item No matter the structure of $G$, we can denoise any signal
  defined over this graph in $O(m+n)$ operations: $O(m)$
  operations for DFS and $O(n)$ operations for the 1d fused lasso on 
  the induced chain.  We call the corresponding estimator---the 1d
  fused lasso run on the DFS-induced chain---the {\it DFS fused
    lasso}. 

\item For an underlying signal $\theta_0$ that generates the data, as   
  in \eqref{eq:model}, such that \smash{$\theta_0 \in
    \BV_G(t)$}, where \smash{$\BV_G(t)$} is the class of signals
  with total variation at most $t$, defined in
  \eqref{eq:bv_class}, the DFS fused lasso estimator has mean squared
  error (MSE) on the order of \smash{$t^{2/3}n^{-2/3}$}. 

\item For an underlying signal \smash{$\theta_0 \in \BV_G(t)$}, the
  fused lasso estimator over the original graph, in
  \eqref{eq:fused_lasso}, also has MSE on the order of
  \smash{$t^{2/3}n^{-2/3}$}.   
\end{enumerate}

The fact that such a fast rate,
\smash{$t^{2/3}n^{-2/3}$}, applies for the fused lasso estimator over 
{\it any} connected graph structure is somewhat surprising.  It
implies that the chain graph represents the hardest graph structure
for denoising signals of bounded variation---at least, hardest for the 
fused lasso, since as we have shown, error rates on general connected
graphs can be no worse than the chain rate of
\smash{$t^{2/3}n^{-2/3}$}. 

We also complement these MSE upper bounds with the following minimax
lower bound over trees.     

\begin{enumerate}
\item[4.] When $G$ is a tree of bounded max degree, the minimax
  MSE over the class \smash{$\BV_G(t)$} scales at
  the rate \smash{$t^{2/3} n^{-2/3}$}. Hence, in this setting, the DFS 
  fused lasso estimator attains the optimal rate, as does the fused 
  lasso estimator over $G$. 
\end{enumerate}

Lastly, we prove the following for signals with a
bounded number of nonzero edge differences.

\begin{enumerate}
\item[5.] For an underlying signal \smash{$\theta_0 \in \BD_G(s)$},  
  where \smash{$\BD_G(s)$} is the class of signals with at most   
  $s$ nonzero edge differences, defined in \eqref{eq:bd_class}, the
  DFS fused lasso (under a condition on the spacing of nonzero
  differences over the DFS-induced chain) has MSE on the order of   
  \smash{$s(\log{s}+\log\log{n}) \log{n}/n+ s^{3/2}/n$}. When $G$ is a  
  tree, the minimax MSE over the class \smash{$\BD_G(s)$} scales as
  $s\log(n/s)/n$. Thus, in this setting, the DFS fused lasso estimator
  is only off by a $\log\log{n}$ factor provided that $s$ is small.
\end{enumerate}

This DFS fused lasso gives us an $O(n)$ time algorithm for
nearly minimax rate-optimal denoising over trees.  On paper, this
only saves a factor of $O(\log{n})$ operations, as recent work (to be  
described in Section \ref{sec:related}) has produced an $O(n\log{n})$
time algorithm for the fused lasso over trees, by
extending earlier dynamic programming ideas over chains.  However,
dynamic programming on a tree is (a) much more complex 
than dynamic programming on a chain (since it relies on  
sophisticated data structures), and (b) noticeably slower in practice 
than dynamic programming over a chain, especially for large problem
sizes. Hence there is still a meaningful difference---both in terms of  
simplicity and practical computational efficiency---between the DFS
fused lasso estimator and the fused lasso over a generic tree.

For a general graph structure, we cannot claim that the statistical 
rates attained by the DFS fused lasso estimator are optimal, nor can
we claim that they match those of fused lasso over the original 
graph. As an example, recent work (to be discussed in 
Section \ref{sec:related}) studying the 
fused lasso over grid graphs shows that estimation error rates for 
this problem can be much faster than those attained by the DFS fused
lasso (and thus the minimax rates over trees).  What should be 
emphasized, however, is that the DFS fused lasso can still be a
practically useful method for any graph, running in linear time
(in the number of edges) no matter the graph structure, a scaling that
is beneficial for truly large problem sizes.  

\subsection{Assumptions and notation}
\label{sec:assumptions}

Our theory will be primarily phrased in terms of the mean squared
error (MSE) an estimator \smash{$\htheta$} of the mean parameter
$\theta_0$ in \eqref{eq:model}, assuming that
$\epsilon=(\epsilon_1,\ldots,\epsilon_n)$ has i.i.d.\ mean zero 
sub-Gaussian components, i.e.,  
\begin{equation}
\label{eq:sub_gauss}
\E(\epsilon_i)=0, \quad \text{and} \quad \P(|\epsilon_i|>t) \leq M 
\exp\big(-t^2/(2\sigma^2)\big), \;\, \text{all}\;\, t \geq 0, \quad   
\text{for}\;\, i=1,\ldots,n,
\end{equation}
for constants $M,\sigma>0$. The MSE of \smash{$\htheta$} will be 
denoted, with a slight abuse of notation, by
\begin{equation*}
\| \htheta - \theta_0 \|_n^2 = \frac{1}{n} \| \htheta - \theta_0 \|_2^2.  
\end{equation*}
(In general, for a vector $x \in \R^n$, we denote its scaled $\ell_2$
norm by \smash{$\|x\|_n = \|x\|_2/\sqrt{n}$}.)  Of course, the MSE
will depend not only on the estimator \smash{$\htheta$} in question
but also on the assumptions that we make about $\theta_0$. We will
focus our study on two classes of signals.  The first is the {\it
  bounded variation class}, defined with respect to the graph $G$,
and a radius parameter $t>0$, as  
\begin{equation}
\label{eq:bv_class}
\BV_G(t) = \{ \theta \in \R^n : \| \nabla_G \theta \|_1 \leq t\}.
\end{equation}
The second is the {\it bounded differences class}, defined again with 
respect to the graph $G$, and a now a sparsity parameter $s>0$, as
\begin{equation}
\label{eq:bd_class}
\BD_G(s) = \{ \theta \in \R^n : \| \nabla_G \theta \|_0 \leq s\}. 
\end{equation}
We call measure of roughness used in the bounded differences class the   
{\it cut metric}, given by replacing the $\ell_1$ norm used to define
the total variation metric by the $\ell_0$ norm, i.e.,    
\begin{equation*}
\|\nabla_G \theta\|_0 = \sum_{e \in E} 1\{\theta_{e^+} \not=
\theta_{e^-}\}, 
\end{equation*}
which counts the number of nonzero edge differences that appear 
in $\theta$. Hence, we may think of the former class in
\eqref{eq:bv_class} 
as representing a type of weak sparsity across these edge differences,
and the latter class in \eqref{eq:bd_class} as representing a type of 
strong sparsity in edge differences. 

When dealing with the chain graph, on $n$ vertices, we will use the
following modifications to our notation. We write 
\smash{$\nabla_{\mathrm{1d}} \in \R^{(n-1)\times n}$} for the edge
incidence matrix of the chain, 
i.e.,   
\begin{equation}
\label{eq:nabla_chain}
\nabla_{\mathrm{1d}} = \left[\begin{array}{rrrrr} 
-1 & 1 & 0 & \ldots &  0\\
0 & -1 & 1 & \ldots &  0\\
\vdots &  & \ddots & \ddots & \\ 
0 & 0 & \ldots & -1 & 1 
\end{array}\right].
\end{equation}
We also write \smash{$\htheta_{\mathrm{1d}}$} for the solution of the
fused lasso problem in \eqref{eq:fused_lasso} over the chain, also
called the {\it 1d fused lasso} solution, i.e., to be explicit, 
\begin{equation}
\label{eq:fused_lasso_1d}
\htheta_{\mathrm{1d}} = \argmin_{\theta \in \R^n} \; 
\half \|y - \theta\|_2^2 + \lambda \sum_{i=1}^{n-1} 
|\theta_{i+1} - \theta_i|.
 \end{equation}
We write \smash{$\BV_{\mathrm{1d}}(t)$} and
\smash{$\BD_{\mathrm{1d}}(s)$} for the bounded variation and bounded
differences classes with respect to the chain, i.e., to be explicit, 
\begin{align*}
\BV_{\mathrm{1d}}(t) &= \{ \theta \in \R^n : \| \nabla_{\mathrm{1d}}
\theta \|_1 \leq t\}, \\
\BD_{\mathrm{1d}}(s) &= \{ \theta \in \R^n : \| \nabla_{\mathrm{1d}}  
\theta \|_0 \leq s\}.  
\end{align*}

Lastly, in addition to the standard notation $a_n=O(b_n)$, for 
sequences $a_n,b_n$ such that
$a_n/b_n$ is upper bounded for $n$ large enough, we use
$a_n \asymp b_n$ to denote that both $a_n=O(b_n)$ and
$a^{-1}_n=O(b_n^{-1})$.  Also, for random sequences $A_n,B_n$, we use 
$A_n = O_\P(B_n)$ to denote that $A_n/B_n$ is bounded in probability.  

\subsection{Related work}
\label{sec:related}

Since its inception in the signal processing and statistics
communities in \citet{rudin1992nonlinear} and
\citet{tibshirani2005sparsity}, respectively, there has been
an impressive amount of work on total variation penalization and the 
fused lasso. We do not attempt to give a complete coverage, 
but point out some relevant computational and theoretical advances,
covering the two categories separately.

\paragraph{Computational.}
On the computational side, it is first worth pointing out that there
are multiple efficient algorithms for solving the fused
lasso problem over a chain graph, i.e., the 1d fused lasso problem.     
\citet{davies2001local} derived an algorithm based on a ``taut
string'' perspective that solves the 1d fused lasso problem in $O(n)$
time (but, the fact that their taut string method solves the 1d
fused lasso problem was not explicitly stated in the work). 
This was later extended by \citet{condat2012direct,barbero2014modular}
to allow for arbitrary weights in both of the individual penalty and
loss terms. \citet{johnson2013dynamic} proposed an entirely different 
$O(n)$ time algorithm for the fused lasso based on dynamic
programming. The taut string and dynamic programming algorithms 
are extremely fast in practice (e.g., they can solve a 1d fused lasso
problem with $n$ in the tens of millions in just a few seconds on a
standard laptop).   

\citet{kolmogorov2016total} extended the dynamic programming approach 
of \citet{johnson2013dynamic} to solve the fused lasso problem on a
tree. Their algorithm in theoretically very efficient, with
$O(n\log{n})$ running time, but the implementation that achieves this
running time (we have found) can be practically slow for large problem  
sizes, compared to dynamic programming on a chain graph.  Alternative
implementations are possible, and may well improve practical
efficiency, but as far as we see it, they will all involve
somewhat sophisticated data structures in the ``merge'' steps in the
forward pass of dynamic programming.

\citet{barbero2014modular} extended (though not in the same direct 
manner) fast 1d fused lasso optimizers to work over grid graphs, using
operator splitting techniques like Douglas-Rachford splitting. Their
techniques appear to be quite efficient in practice, and the authors 
provide thorough comparisons and a thorough literature review of
related methods. Over general graphs structures, many algorithms
have been proposed, 
e.g., to highlight a few: \citet{chambolle2009total} described a direct
algorithm based on a reduction to parametric max flow programming;
\citet{hoefling2010path,tibshirani2011solution} gave solution path
algorithms (tracing out the solution in \eqref{eq:fused_lasso} over
all $\lambda \in [0,\infty]$); \citet{chambolle2011first} described
what can be seen as a kind of preconditioned ADMM-style algorithm; 
\citet{kovac2011nonparametric} described an active set approach; 
\citet{tansey2015fast} leveraged fast 1d fused lasso solvers in an
ADMM decomposition over trails of the graph; most recently, 
\citet{landrieu2016cut} derived a new method based on graph cuts.
We emphasize that, even with the advent of these numerous clever
computational techniques for the fused lasso over general graphs, it
is still far slower to solve the fused lasso over an arbitrary graph
than it is to solve the fused lasso over a chain. 

\paragraph{Theoretical.} On the theoretical side, it seems that the
majority of statistical theory on the fused lasso can be placed into
two categories: analysis of changepoint recovery, and analysis of MSE.
Some examples of works focusing on changepoint recovery are 
\citet{rinaldo2009properties,harchaoui2010multiple,qian2012pattern,
rojas2014change}.  The statistical theory 
will concern MSE rates, and hence we give a more detailed review of 
related literature for this topic. 

We begin with results for chain graphs. \citet{mammen1997locally}
proved, when \smash{$\theta_0 \in \BV_{\mathrm{1d}}(t)$}, that the
1d fused lasso estimator estimator \smash{$\htheta_{\mathrm{1d}}$}
with \smash{$\lambda \asymp t^{-1/3}n^{1/3}$} satisfies 
\begin{equation}
\label{eq:bv_bound_1d}
\|\htheta_{\mathrm{1d}} - \theta_0\|_n^2 = O_\P(t^{2/3} n^{-2/3}). 
\end{equation}
This is indeed the minimax MSE rate for the class
\smash{$\BV_{\mathrm{1d}}(t)$}, as implied by the minimax
results in \citet{donoho1998minimax}.   
(For descriptions of the above upper bound and this minimax rate    
in a language more in line with that of the current paper, see 
\citet{tibshirani2014adaptive}.) Recently,
\citet{lin2016approximate} improved on earlier results for
the bounded differences class in \citet{dalalyan2014prediction}, 
and proved that when \smash{$\theta_0 \in \BD_{\mathrm{1d}}(s)$},  
the 1d fused lasso estimator \smash{$\htheta_{\mathrm{1d}}$}   
with \smash{$\lambda \asymp (nW_n)^{1/4}$} satisfies  
\begin{equation}
\label{eq:bd_bound_1d}
\|\htheta_{\mathrm{1d}} - \theta_0\|_n^2 = O_\P\bigg(\frac{s}{n} 
\Big((\log{s}+\log\log{n})\log{n} + \sqrt{n/W_n}\Big)\bigg), 
\end{equation}
where $W_n$ denotes the minimum distance between positions at which
nonzero differences occur in $\theta_0$, more precisely, 
\smash{$W_n= \min \{ |i-j| : (\nabla_{\mathrm{1d}} \theta_0)_i\not=0,
(\nabla_{\mathrm{1d}} \theta_0)_j\not=0 \}$}.  When these nonzero
differences or ``jumps'' in $\theta_0$ are evenly spaced apart, we
have $W_n \asymp n/s$, and the above becomes, for 
\smash{$\lambda \asymp \sqrt{n}s^{-1/4}$},
\begin{equation}
\label{eq:bd_bound_1d_no_wn}
\|\htheta_{\mathrm{1d}} - \theta_0\|_n^2 =
O_\P\bigg(\frac{s(\log{s}+\log\log{n})\log{n}}{n} + 
\frac{s^{3/2}}{n} \bigg). 
\end{equation}
This is quite close to the minimax lower bound, whose rate is 
\smash{$s \log(n/s) /n$}, that we establish for the class
\smash{$\BD_{\mathrm{1d}}(s)$}, in Theorem \ref{thm:bd_minimax}. (The  
minimax lower bound that we prove this theorem actually holds beyond 
the chain graph, and applies to tree graphs).  We can
see that the 1d fused lasso rate in \eqref{eq:bd_bound_1d_no_wn} is
only off by a factor of $\log\log{n}$, provided that $s$ does not grow
too fast (specifically, \smash{$s=O((\log{n}\log\log{n})^2)$}).  

Beyond chain graphs, the story is in general much less clear, however,  
interesting results are known in special cases. For a
$d$-dimensional grid graph, with $d \geq 2$, \citet{hutter2016optimal}  
recently improved 
on results of \citet{wang2016trend}, showing that for 
\smash{$\theta_0 \in \BV_G(t) \cap \BD_G(s)$}, the fused lasso 
estimator \smash{$\htheta_G$} over $G$ satisfies
\begin{equation}
\label{eq:grid_bound}
\|\htheta_G - \theta_0\|_n^2 = O_\P\bigg(\min\{t,s\}
\frac{\log^a{n}}{n} \bigg).
\end{equation}
when \smash{$\lambda \asymp \log^{a/2}{n}$},
where $a=2$ if $d=2$, and $a=1$ if $d \geq 3$.
A minimax lower bound on the MSE rate for the \smash{$\BV_G(t)$} 
class over a grid $G$ of dimension $d \geq 2$ was established
to be \smash{$t \sqrt{\log(n/t)}/n$}, by \citet{sadhanala2016total}.
This makes the rate achieved by the fused lasso in
\eqref{eq:grid_bound} nearly optimal for bounded variation signals,
off by at most a \smash{$\log^{3/2}{n}$} factor when $d=2$, and a
\smash{$\log{n}$} factor when $d \geq 3$.

\citet{wang2016trend,hutter2016optimal} also derived MSE rates for the 
fused lasso over several other graph structures, such as
Erdos-Renyi random graphs, Ramanujan $d$-regular graphs, star graphs,
and complete graphs.  As it is perhaps the most relevant to our
goals in this paper, we highlight the MSE bound from
\citet{wang2016trend} that applies to arbitrary connected 
graphs.  Their Theorem 3 implies, for 
a generic connected graph $G$, \smash{$\theta_0 \in \BV_G(t)$},
that the fused lasso estimator \smash{$\htheta_G$} over $G$ with  
\smash{$\lambda \asymp \sqrt{n\log{n}}$} satisfies
\begin{equation}
\label{eq:gen_bound}
\|\htheta_G - \theta_0\|_n^2 = O_\P\bigg( 
t \sqrt{\frac{\log{n}}{n}} \bigg). 
\end{equation}
(See Appendix \ref{app:gen_bound} for details.)  In Theorem
\ref{thm:bv_bound}, we show that the universal $tn^{-1/2}$ rate
(ignoring log terms) in \eqref{eq:gen_bound} for the fused lasso over
an arbitrary connected graph can be improved to
\smash{$t^{2/3}n^{-2/3}$}. In Theorem
\ref{thm:bv_bound_dfs}, we show that the same rate can indeed be 
achieved by a simple, linear-time algorithm: the DFS fused lasso.   

\subsection{Outline}
\label{sec:outline} 

In Section \ref{sec:dfs}, we prove a simple but key lemma relating the
$\ell_1$ norm (and $\ell_0$) norm of differences on a tree and a chain
induced by running DFS.  We then define the DFS fused lasso
estimator. In Section \ref{sec:bv}, we derive MSE rates for the DFS
fused lasso, and the fused lasso over the original graph $G$ in
question, for signals of bounded variation.  We also derive lower  
bounds for the minimax MSE rate over trees.  In Section \ref{sec:bd},
we proceed similarly, but for signals with bounded differences.
In Section \ref{sec:experiments}, we cover numerical experiments, and  
in Section \ref{sec:discussion}, we summarize our work
and also describe some potential extensions.

\section{The DFS fused lasso}
\label{sec:dfs}

In this section, we define the DFS-induced chain graph and the DFS
fused lasso.

\subsection{Tree and chain embeddings}
\label{sec:dfs_lemma} 

We start by studying some of the fundamental properties associated
with total variation on general graphs, and embedded trees and chains.
Given a graph $G=(V,E)$, let $T=(V,E_T)$ be an arbitrary spanning tree
of $G$.  It is clear that for any signal, its total variation of over
$T$ is no larger than its total variation over $G$, 
\begin{equation}
\label{eq:tree_l1}
\| \nabla_T \theta \|_1 = \sum_{e \in E_T} 
|\theta_{e^+}- \theta_{e^-}| \leq \sum_{e \in E} 
|\theta_{e^+}- \theta_{e^-}| = \| \nabla_G \theta\|_1,
\quad \text{for all} \;\, \theta \in \R^n.
\end{equation}
The above inequality, albeit very simple, reveals to us the following
important fact: if the underlying mean $\theta_0$ in \eqref{eq:model}
is assumed to be smooth with respect to the graph $G$, inasmuch as  
\smash{$\|\nabla_G \theta_0\|_1 \leq t$}, then it must also be smooth 
with respect to any spanning tree $T$ of $G$, since
\smash{$\|\nabla_T \theta_0\|_1 \leq t$}.  Roughly 
speaking, computing the fused lasso solution in \eqref{eq:fused_lasso}
over a spanning tree $T$, instead of $G$, would therefore still be
reasonable for the denoising purposes, as the mean $\theta_0$
would still be smooth over $T$ according to the total variation
metric. 

The same property as in \eqref{eq:tree_l0} also holds if we
replace total variation by the cut metric:
\begin{equation}
\label{eq:tree_l0}
\| \nabla_T \theta \|_0 = \sum_{e \in E_T} 
1\{\theta_{e^+} \not= \theta_{e^-}\} \leq \sum_{e \in E}   
1\{\theta_{e^+} \not= \theta_{e^-}\} = \| \nabla_G \theta\|_0,
\quad \text{for all} \;\, \theta \in \R^n.
\end{equation}
Thus for the mean $\theta_0$, the property \smash{$\|\nabla_G
  \theta_0\|_0 \leq s$} again implies \smash{$\|\nabla_T \theta_0\|_0 
  \leq s$} for any spanning tree $T$ of $G$, and this would again
justify solving the fused lasso over $T$, in place of $G$, 
assuming smoothness of $\theta_0$ with respect to the cut metric in
the first place.  

Here we go one step further than \eqref{eq:tree_l1},
\eqref{eq:tree_l0}, and assert that analogous properties actually hold 
for specially embedded chain graphs. The next lemma gives the key
result. 

\begin{lemma}
\label{lem:dfs}
Let $G=(V,E)$ be a connected graph, where recall we write  
$V=\{1,\ldots,n\}$. Consider depth-first search (DFS) run on $G$, 
and denote by $v_1,\ldots,v_n$ the nodes in the order in which they
are reached by DFS. Hence, DFS first visits $v_1$, then $v_2$, then
$v_3$, etc. This induces a bijection $\tau : \{1,\ldots,n\} \to 
\{1,\ldots,n\}$, such that 
\begin{equation*}
\tau(i) = v_i, \quad \text{for all} \;\, i=1,\ldots,n.
\end{equation*}
Let $P \in \R^{n \times n}$ denote the permutation associated with
$\tau$. Then it holds that
\begin{equation}
\label{eq:dfs_l1}
\|\nabla_{\mathrm{1d}} P \theta\|_1 \leq 2 \|\nabla_G \theta\|_1, 
\quad \text{for all} \;\, \theta \in \R^n,
\end{equation}
as well as 
\begin{equation}
\label{eq:dfs_l0}
\|\nabla_{\mathrm{1d}} P \theta\|_0 \leq 2 \|\nabla_G \theta\|_0,
\quad \text{for all} \;\, \theta \in \R^n.
\end{equation}
\end{lemma}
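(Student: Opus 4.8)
The plan is to work directly with the DFS traversal and account for every edge of the chain $v_1, v_2, \ldots, v_n$ by charging it to edges of $G$. Recall the standard structure of DFS: it explores the graph along a tree (the DFS tree $T$), and each consecutive pair $(v_i, v_{i+1})$ in the visiting order is connected in $G$ in one of two ways. Either $v_{i+1}$ is a child of $v_i$ in $T$ (a ``descent'' step, so $\{v_i, v_{i+1}\} \in E_T \subseteq E$), or DFS has finished exploring the subtree rooted at $v_i$ and backtracks up to an ancestor before descending into a new branch at $v_{i+1}$. In the latter case, $v_i$ and $v_{i+1}$ are both joined by tree edges to their common DFS-ancestors, and in fact the unique path in $T$ from $v_i$ to $v_{i+1}$ passes through the node at which DFS ``turns around.'' The key combinatorial fact I would establish is that each tree edge $e \in E_T$ is used \emph{at most twice} across all such backtracking-induced paths: once when DFS descends through $e$ into the subtree below it, and once when DFS climbs back up through $e$ after that subtree is exhausted.

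Concretely, for \eqref{eq:dfs_l1}, I would write
\begin{equation*}
\|\nabla_{\mathrm{1d}} P\theta\|_1 = \sum_{i=1}^{n-1} |\theta_{v_{i+1}} - \theta_{v_i}|,
\end{equation*}
and bound each term $|\theta_{v_{i+1}} - \theta_{v_i}|$ by the sum of $|\theta_{e^+} - \theta_{e^-}|$ over the edges $e$ on the unique $T$-path from $v_i$ to $v_{i+1}$, using the triangle inequality. Summing over $i$, and invoking the ``each tree edge used at most twice'' claim, the right-hand side is at most $2\sum_{e \in E_T} |\theta_{e^+} - \theta_{e^-}| = 2\|\nabla_T \theta\|_1$, which by \eqref{eq:tree_l1} is at most $2\|\nabla_G \theta\|_1$. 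The argument for \eqref{eq:dfs_l0} is identical in structure: if $\theta_{v_{i+1}} \neq \theta_{v_i}$, then at least one edge on the $T$-path between them has $\theta_{e^+} \neq \theta_{e^-}$, so $1\{\theta_{v_{i+1}} \neq \theta_{v_i}\} \le \sum_{e \in \mathrm{path}} 1\{\theta_{e^+} \neq \theta_{e^-}\}$; summing and using the same edge-multiplicity bound gives $\|\nabla_{\mathrm{1d}} P\theta\|_0 \le 2\|\nabla_T \theta\|_0 \le 2\|\nabla_G \theta\|_0$ via \eqref{eq:tree_l0}.

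The main obstacle — really the only nontrivial point — is making the ``each tree edge is traversed at most twice'' claim precise and proving it cleanly. I would phrase it in terms of the classic DFS ``discovery/finish time'' bookkeeping: orient each tree edge from parent to child, and observe that the portion of the chain $v_i, \ldots, v_{i+1}$ corresponding to a backtracking step ascends a contiguous run of tree edges and then descends a contiguous run of tree edges, with the turning point being the deepest common ancestor of $v_i$ and $v_{i+1}$. A given tree edge $e = (u, w)$ with $w$ the child lies on such a path exactly when $v_i$ is in the subtree rooted at $w$ and $v_{i+1}$ is not (the ``ascending'' usage) or vice versa (the ``descending'' usage), and because the nodes of the subtree rooted at $w$ form a contiguous block in the DFS order, there is exactly one index $i$ where the order exits this block and exactly one where it enters it — giving at most two usages total. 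I would spell this out carefully, as it is the crux; the rest is triangle inequality and the already-established tree-versus-graph inequalities \eqref{eq:tree_l1} and \eqref{eq:tree_l0}.
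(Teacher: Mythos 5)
Your proposal is correct and takes essentially the same route as the paper's proof: decompose each consecutive difference $|\theta_{v_{i+1}}-\theta_{v_i}|$ along a path of graph edges via the triangle inequality, then observe that each edge is charged at most twice. The only difference is that where the paper simply invokes the ``fundamental property that DFS visits each edge exactly twice,'' you supply an explicit justification of this multiplicity bound (a tree edge $(u,w)$ lies on the $T$-path from $v_i$ to $v_{i+1}$ iff exactly one endpoint lies in the subtree rooted at $w$, and that subtree occupies a contiguous block of the DFS order, so the block is entered once and exited once), which is a precise and correct rendering of the same key step.
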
 

\begin{proof}
The proof is simple. Observe that
\begin{equation}
\label{eq:dfs_sum}
\|\nabla_{\mathrm{1d}} P \theta\|_1 = \sum_{i=1,\ldots,n-1}
| \theta_{\tau(i+1)} - \theta_{\tau(i)} |,
\end{equation}
and consider an arbitrary summand \smash{$| \theta_{\tau(i+1)} - 
  \theta_{\tau(i)} |$}. There are now two cases to examine. First,
suppose $\tau(i)$ is not a leaf node, and $\tau(i+1)$ has not yet  
been visited by DFS; then there is an edge $e \in E$ such that     
\smash{$\{e^-,e^+\} = \{\tau(i), \tau(i+1)\}$}, and 
\smash{$|\theta_{\tau(i+1)} - \theta_{\tau(i)} | = |\theta_{e^+} -  
  \theta_{e^-}|$}. Second, suppose that either $\tau(i)$ is a leaf 
node, or all of its neighbors have already been visited by DFS; then
there is a path $p = \{p_1,\ldots,p_r\}$ in the graph such that
$p_1=\tau(i)$, $p_r=\tau(i+1)$, and each $\{p_j,p_{j+1}\} \in E$,
$j=1,\ldots,r-1$, so that by the triangle inequality
\begin{equation*}
|\theta_{\tau(i+1)} - \theta_{\tau(i)} | \leq
  \sum_{j=1}^{r-1} |\theta_{p_{j-1}} - \theta_{p_j}|.
\end{equation*}
Applying this logic over all terms in the sum in \eqref{eq:dfs_sum},
and invoking the fundamental property that DFS visits each edge
exactly twice (e.g., Chapter 22 of \citet{cormen2001introduction}), we
have established \eqref{eq:dfs_l1}. The proof for
\eqref{eq:dfs_l0} follows from precisely the same arguments.  
\end{proof}

\begin{example}
The proof behind Lemma \ref{lem:dfs} can also be clearly demonstrated 
through an example. We consider $G$ to be a binary tree graph with $n=7$
nodes, shown below, where we have labeled the nodes according to the
order in which they are visited by DFS (i.e., so that here $P$ is the
identity).   

\begin{center}
  \begin{tikzpicture}[level distance=1.5cm,
    level 1/.style={sibling distance=5cm},
    level 2/.style={sibling distance=1.5cm}]
    \node {1}
    child {node {2}
      child {node {3}}
      child {node {4}}
    }
    child {node {5}
      child {node {6}}
      child {node {7}}
    };
  \end{tikzpicture} 
\end{center}
In this case, 
\begin{align*}
\|\Delta_{\mathrm{1d}} \theta\|_1 &= 
\sum_{i=1}^6 |\theta_{i+1}-\theta_i| \\
&\leq |\theta_2-\theta_1| + |\theta_3-\theta_2| + 
\big( |\theta_3-\theta_2| + |\theta_4-\theta_2| \big) + 
\big( |\theta_4-\theta_2| + |\theta_2-\theta_1| +
|\theta_5-\theta_1| \big) \\
&\quad + |\theta_6 - \theta_5| + \big( |\theta_6-\theta_5| + 
|\theta_7 - \theta_5| \big) \\
&\leq 2 \sum_{e \in G} |\theta_{e^+} - \theta_{e^-}| 
=2 \|\nabla_G \theta\|_1,
\end{align*}
where in the inequality above, we have used triangle inequality for
each term in parentheses individually.
\end{example}

\subsection{The DFS fused lasso}
\label{sec:dfs_estimator}

We define the DFS fused lasso estimator, 
\smash{$\htheta_{\mathrm{DFS}}$}, to be the fused lasso estimator  
over the chain graph induced by running DFS on $G$. Formally, if
$\tau$ denotes the bijection associated with the DFS ordering (as
described 
in Lemma \ref{lem:dfs}), then the DFS-induced chain graph can be
expressed as $C=(V,E_C)$ where $V=\{1,\ldots,n\}$ and  
$E_C=\{\{\tau(1),\tau(2)\}, \ldots, \{\tau(n-1),\tau(n)\}\}$.
Denoting by $P$ the permutation matrix associated with $\tau$, the   
edge incidence matrix of $C$ is simply 
\smash{$\nabla_C = \nabla_{\mathrm{1d}} P$}, and the DFS fused lasso
estimator is given by
\begin{align}
\nonumber
\htheta_{\mathrm{DFS}} &\;=\; \argmin_{\theta \in \R^n} \;  
\half \|y - \theta\|_2^2 + 
\lambda \|\nabla_{\mathrm{1d}} P \theta\|_1 \\ 
\label{eq:fused_lasso_dfs}
&\;=\; P^\top \left( \argmin_{\theta \in \R^n} \; 
\half \|Py - \theta\|_2^2 + \lambda \sum_{i=1}^{n-1} 
|\theta_{i+1} - \theta_i| \right).
\end{align}
Therefore, we only need to compute the 1d fused lasso estimator on a 
permuted data vector $Py$, and apply the inverse permutation operator 
$P^\top$, in order to compute \smash{$\htheta_{\mathrm{DFS}}$}.  

Given the permutation matrix $P$, the computational cost of
\eqref{eq:fused_lasso_dfs} is $O(n)$, since, to recall the
discussion in Section \ref{sec:related}, the 1d fused lasso problem 
\eqref{eq:fused_lasso_1d} can be solved in $O(n)$ operations with
dynamic programming or taut string algorithms. The permutation $P$ is
obtained by running DFS, which requires $O(m)$ operations,
and makes the total computation cost of the DFS fused lasso estimator 
$O(m+n)$. 

It should be noted that, when multiple estimates are desired over the 
same graph $G$, we must only run DFS once, and all subsequent
estimates on the induced chain require just $O(n)$ operations.

The bounds in \eqref{eq:dfs_l1}, \eqref{eq:dfs_l0} for the DFS chain
are like those in \eqref{eq:tree_l1}, \eqref{eq:tree_l0} for spanning
trees, and carry the same motivation as that discussed above
for spanning trees, beneath \eqref{eq:tree_l1}, \eqref{eq:tree_l0}: if
the mean $\theta_0$ is assumed to be smooth with respect to $t$,
insofar as its total variation satisfies \smash{$\|\nabla_G
  \theta_0\|_1 \leq t$}, then denoising with respect to $C$ would also
be reasonable, in that \smash{$\|\nabla_{\mathrm{1d}} P \theta_0\|_1
  \leq 2t$}; the same can be said for the cut metric.  However, it is
the rapid $O(m+n)$ computational cost of the DFS fused lasso, and also   
the simplicity of the dynamic programming and taut string algorithms
for the 1d fused lasso problem \eqref{eq:fused_lasso_1d}, that makes 
\eqref{eq:dfs_l1}, \eqref{eq:dfs_l0} particularly appealing
compared to   
\eqref{eq:tree_l1}, \eqref{eq:tree_l0}.  To recall the discussion in
Section \ref{sec:related}, the fused lasso can in principle be
computed efficiently over a tree, in $O(n\log{n})$ operations using 
dynamic programming, but this requires a much more cumbersome
implementation and in practice we have found it to be noticeably  
slower. 

\subsection{Running DFS on a spanning tree}
\label{sec:dfs_remarks}

We can think of the induced chain graph, as described in the last
section, as being computed in two steps:   
\begin{enumerate}
\item[(i)] run DFS to compute a spanning tree $T$ of $G$;
\item[(ii)] run DFS on the spanning tree $T$ to define the chain $C$.  
\end{enumerate}
Clearly, this is the same as running DFS on $G$ to define the induced  
chain $C$, so decomposing this process into two steps as we have done 
above may seem odd.  But this decomposition provides a useful 
perspective because it leads to the idea that we could compute the
spanning tree $T$ in Step (i) in any fashion, and then proceed with
DFS on $T$ in Step 2 in order to define the 
chain $C$.  Indeed, any spanning tree in Step (i) will lead to a chain
$C$ that has the properties \eqref{eq:dfs_l1}, \eqref{eq:dfs_l0}
as guaranteed by Lemma \ref{lem:dfs}. This may be of interest if we
could compute a spanning tree $T$ that better represents the topology
of the original graph $G$, so that the differences over the eventual
chain $C$ better mimicks those over $G$. 

An example of a spanning tree whose
topology is designed to reflect that of the original graph is a
low-stretch spanning tree. Current interest on low-stretch
spanning trees began with the breakthrough results in
\citet{elkin2008lower}; most recently, \citet{abraham2012petal} 
showed that a spanning tree with average stretch
\smash{$O(\log{n}\log\log{n})$} can be computed in
\smash{$O(m\log{n}\log\log{n})$} operations.
In Section \ref{sec:experiments}, we investigate low-stretch 
spanning trees experimentally.

In Section \ref{sec:weighted}, we discuss a setting in which the
fused lasso problem \eqref{eq:fused_lasso} has arbitrary penalty
weights, which gives rise to a weighted graph $G$. 
In this setting, an example of a spanning tree that can be crafted
so that its edges represent important differences in the original
graph is a maximum spanning tree. Prim's and Kruskal's minimum
spanning tree algorithms, each of which take $O(m\log{n})$ time
\citep{cormen2001introduction}, can be used to compute a maximum
spanning tree after we negate all edge weights.  

\subsection{Averaging multiple DFS estimators}

Notice that several DFS-induced chains can be formed from a single 
seed graph $G$, by running DFS itself on $G$ with different
random starts (or random decisions about which edge to follow at   
each step in DFS), or by computing different spanning trees $T$ of $G$   
(possibly themselves randomized) on which we run DFS, or by some 
combination, etc. Denoting by 
\smash{$\htheta^{(1)}_{\mathrm{DFS}}, \htheta^{(2)}_{\mathrm{DFS}},
  \ldots, \htheta^{(K)}_{\mathrm{DFS}}$} the DFS fused lasso estimators
fit to $K$ different induced chains, we might believe that the average  
estimator, \smash{$(1/K) \sum_{k=1}^K \htheta^{(k)}_{\mathrm{DFS}}$}, 
will have good denoising performance, as it incorporates fusion at 
each node in multiple directions. In Section \ref{sec:experiments}, we
demonstrate that this intuition holds true (at least, across the set
of experiments we consider).

\section{Analysis for signals of bounded variation}
\label{sec:bv}

Throughout this section, we assume that the underlying mean $\theta_0$ 
in \eqref{eq:model} satisfies \smash{$\theta_0 \in \BV_G(t)$} for
a generic connected graph $G$.
We derive upper bounds on the MSE rates of the DFS fused lasso and the  
fused lasso over $G$. We also derive a tight lower bound on the
minimax MSE when $G$ is a tree that of bounded degree.

\subsection{The DFS fused lasso}

The analysis for the DFS fused lasso estimator is rather 
straightforward. By assumption, \smash{$\|\nabla_G \theta_0\|_1 \leq 
  t$}, and thus \smash{$\|\nabla_{\mathrm{1d}} P \theta_0\|_1 \leq 
  2t$} by \eqref{eq:dfs_l1} in Lemma \ref{lem:dfs}.   Hence, we may  
think of our model \eqref{eq:model} as giving us i.i.d.\ data  
$Py$ around \smash{$P\theta_0 \in \BV_{\mathrm{1d}}(2t)$}, and we may
apply existing results from \citet{mammen1997locally} on the 1d fused 
lasso for bounded variation signals, as described in
\eqref{eq:bv_bound_1d} in Section \ref{sec:related}. This establishes the
following.

\begin{theorem}
\label{thm:bv_bound_dfs}
Consider a data model \eqref{eq:model}, with i.i.d.\ 
sub-Gaussian errors as in \eqref{eq:sub_gauss}, and
\smash{$\theta_0 \in \BV_G(t)$}, where $G$ is a generic
connected graph. Then for any DFS ordering of $G$ yielding a
permutation matrix $P$, the DFS fused lasso estimator 
\smash{$\htheta_{\mathrm{DFS}}$} in \eqref{eq:fused_lasso_dfs}, with
a choice of tuning parameter \smash{$\lambda \asymp t^{-1/3}n^{1/3}$},
has MSE converging in probability at the rate
\begin{equation}
\label{eq:bv_bound_dfs}
\|\htheta_{\mathrm{DFS}} - \theta_0\|_n^2 = O_\P(t^{2/3} n^{-2/3}).
\end{equation}
\end{theorem}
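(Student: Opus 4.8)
The plan is to reduce Theorem \ref{thm:bv_bound_dfs} directly to the known $1$d fused lasso bound \eqref{eq:bv_bound_1d} of \citet{mammen1997locally} by transporting the problem through the DFS permutation. The key observation is that applying an orthogonal permutation $P$ to the data model \eqref{eq:model} preserves the i.i.d.\ sub-Gaussian structure of the noise: writing $z = Py$, $\mu_0 = P\theta_0$, and $\delta = P\epsilon$, we have $z = \mu_0 + \delta$ where $\delta$ still has i.i.d.\ mean-zero components satisfying the tail bound \eqref{eq:sub_gauss} with the same constants $M,\sigma$ (a permutation just relabels the coordinates). So the permuted data is a legitimate instance of the $1$d denoising problem.

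First I would invoke Lemma \ref{lem:dfs}, specifically \eqref{eq:dfs_l1}, together with the standing assumption $\theta_0 \in \BV_G(t)$, to conclude $\|\nabla_{\mathrm{1d}} \mu_0\|_1 = \|\nabla_{\mathrm{1d}} P\theta_0\|_1 \leq 2t$, i.e.\ $\mu_0 \in \BV_{\mathrm{1d}}(2t)$. Second, I would note that the estimator computed on the permuted data is exactly the $1$d fused lasso applied to $z$: from the second line of \eqref{eq:fused_lasso_dfs}, $P\htheta_{\mathrm{DFS}} = \argmin_{\theta} \half\|z-\theta\|_2^2 + \lambda\sum_{i=1}^{n-1}|\theta_{i+1}-\theta_i|$, which is precisely $\htheta_{\mathrm{1d}}$ run on data $z$. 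Third, since $P$ is orthogonal, the scaled loss is invariant: $\|\htheta_{\mathrm{DFS}} - \theta_0\|_n^2 = \tfrac1n\|P(\htheta_{\mathrm{DFS}} - \theta_0)\|_2^2 = \|P\htheta_{\mathrm{DFS}} - \mu_0\|_n^2$. Now apply \eqref{eq:bv_bound_1d} with radius $2t$ in place of $t$: with $\lambda \asymp (2t)^{-1/3}n^{1/3} \asymp t^{-1/3}n^{1/3}$, one gets $\|P\htheta_{\mathrm{DFS}} - \mu_0\|_n^2 = O_\P((2t)^{2/3}n^{-2/3}) = O_\P(t^{2/3}n^{-2/3})$, which is exactly \eqref{eq:bv_bound_dfs}.

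There is essentially no analytical obstacle here — the content of the theorem is entirely front-loaded into Lemma \ref{lem:dfs}, and everything downstream is bookkeeping. The only points worth stating carefully are (a) the noise-invariance under permutation, so that the hypotheses of \citet{mammen1997locally} genuinely apply to $z = Py$; (b) the absorption of the factor $2$ into the $\asymp$ constants, both in the radius of the bounded-variation class and in the prescribed tuning parameter; and (c) the isometry of $\|\cdot\|_n$ under $P$ so that the MSE of $\htheta_{\mathrm{DFS}}$ for $\theta_0$ equals the MSE of $\htheta_{\mathrm{1d}}$ for $\mu_0$. If anything counts as the "hard part," it is simply making sure the citation to \eqref{eq:bv_bound_1d} is used in the form stated (with a $\BV_{\mathrm{1d}}$ radius that is a constant multiple of the original $t$), but this is routine since the $1$d rate is continuous and monotone in the radius.
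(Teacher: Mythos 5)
Your proposal is correct and follows exactly the paper's own argument: apply \eqref{eq:dfs_l1} from Lemma \ref{lem:dfs} to conclude $P\theta_0 \in \BV_{\mathrm{1d}}(2t)$, observe that $Py$ is i.i.d.\ sub-Gaussian data around $P\theta_0$, and invoke the 1d bound \eqref{eq:bv_bound_1d} of \citet{mammen1997locally}, with the factor of $2$ and the permutation isometry absorbed into constants. The paper states this reduction more tersely, but the substance is identical.
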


We note that, if multiple DFS fused lasso estimators
\smash{$\htheta^{(1)}_{\mathrm{DFS}}, \htheta^{(2)}_{\mathrm{DFS}},
  \ldots, \htheta^{(K)}_{\mathrm{DFS}}$} are computed across multiple 
different DFS-induced chains on $G$, then the average estimator
clearly satisfies the same bound as in \eqref{eq:bv_bound_dfs},
\begin{equation*}
\left\|\frac{1}{K}\sum_{k=1}^K \htheta_{\mathrm{DFS}}^{(k)} - 
  \theta_0\right\|_n^2 = O_\P(t^{2/3} n^{-2/3}),
\end{equation*}
provided that $K$ is held constant, by the triangle inequality.  

\subsection{The graph fused lasso}

Interestingly, the chain embedding result \eqref{eq:dfs_l1} in Lemma
\ref{lem:dfs} is not only helpful for establishing the MSE rate for the
DFS fused lasso estimator in Theorem \ref{thm:bv_bound_dfs}, but
it can also be used to improve the best known rate for the
original fused lasso estimator over the graph $G$.   
In Section \ref{sec:related}, we
described a result \eqref{eq:gen_bound} that follows from
\citet{wang2016trend},  
establishing an MSE rate of \smash{$tn^{-1/2}$} rate (ignoring log
terms) for the fused lasso estimator over a connected graph $G$, when
\smash{$\|\nabla_G \theta_0\|_1 \leq t$}.  In fact, as we will now
show, this can be improved to a rate of \smash{$t^{2/3}n^{-2/3}$},
just as in \eqref{eq:bv_bound_dfs} for the DFS fused lasso. 

\citet{wang2016trend} present a framework for deriving
fast MSE rates for fused lasso estimators based on entropy. They
show in their Lemma 9 that a bound in probability on the sub-Gaussian
complexity    
\begin{equation}
\label{eq:sg_complexity}
\max_{x \in \cS_G(1)} \; \frac{\epsilon^\top x}{\|x\|_2^{1-w/2}},
\end{equation}
for some $0<w<2$, where \smash{$\cS_G(1) = \{ x \in \row(\nabla_G) :
  \|\nabla_G x \|_1 
  \leq 1\}$}, leads to a bound in probability on the MSE of the fused
lasso estimator \smash{$\htheta_G$} over $G$.  (\citet{wang2016trend}
actually assume  
Gaussian errors, but their Lemma 9, Theorem 10, Lemma 11, and 
Corollary 12 still hold for sub-Gaussian errors as in
\eqref{eq:sub_gauss}). The sub-Gaussian complexity in
\eqref{eq:sg_complexity} is typically controlled via an entropy bound 
on the class \smash{$\cS_G(1)$}.  Typically, one thinks of controlling
entropy by focusing on specific classes of graph structures $G$.
Perhaps surprisingly, Lemma \ref{lem:dfs} shows we can uniformly
control the sub-Gaussian complexity \eqref{eq:sg_complexity} over
all connected graphs. 

For any DFS-induced chain $C$ constructed from $G$, note 
first that \smash{$\row(\nabla_G) = \spa\{\one\}^\perp = 
  \row(\nabla_C)$}, where $\one=(1,\ldots,1) \in \R^n$ is the 
vector of all 1s.  This, and \eqref{eq:dfs_l1} in Lemma
\ref{lem:dfs}, imply that
\begin{equation*}
\max_{x \in \cS_G(1)} \; \frac{\epsilon^\top x}{\|x\|_2^{1-w/2}} \leq
\max_{x \in \cS_C(2)} \; \frac{\epsilon^\top x}{\|x\|_2^{1-w/2}}.
\end{equation*}
Now, taking $w=1$,
\begin{equation*}
\max_{x \in \cS_C(2)} \; \frac{\epsilon^\top x}{\|x\|_2^{1/2}} =  
\max_{\substack{x \;:\; \one^\top x = 0, \\
\|\nabla_{\mathrm{1d}} P x \|_1 \leq 2}} \; 
\frac{\epsilon^\top x}{\|x\|_2^{1/2}} = 
\max_{\substack{x \;:\; \one^\top x = 0, \\ 
\|\nabla_{\mathrm{1d}} x \|_1 \leq 1}} \; 
\frac{2^{-1/2} (P\epsilon)^\top x}{\|x\|_2^{1/2}} =    
O_\P(n^{1/4}).
\end{equation*}
The last step (asserting that the penultimate term is
\smash{$O_\P(n^{1/4})$}) holds   
by first noting that $P\epsilon$ is equal in law to $\epsilon$ (as we
have assumed i.i.d.\ components of the error vector), and then 
applying results on the chain graph in Theorem 10, Lemma 11, and
Corollary 12 of \citet{wang2016trend}.  Applying Lemma 9 of
\citet{wang2016trend}, we have now established the following result. 

\begin{theorem}
\label{thm:bv_bound}
Consider a data model \eqref{eq:model}, with i.i.d.\ 
sub-Gaussian errors as in \eqref{eq:sub_gauss}, and
\smash{$\theta_0 \in \BV_G(t)$}, where $G$ is a generic
connected graph. Then the fused lasso estimator
\smash{$\htheta_G$} over $G$, in \eqref{eq:fused_lasso}, under 
a choice of tuning parameter \smash{$\lambda \asymp t^{-1/3}n^{1/3}$}, 
has MSE converging in probability at the rate 
\begin{equation}
\label{eq:bv_bound}
\|\htheta_G - \theta_0\|_n^2 = O_\P(t^{2/3} n^{-2/3}).  
\end{equation}
\end{theorem}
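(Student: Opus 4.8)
The plan is to route everything through the entropy-based analysis of \citet{wang2016trend}: by their Lemma 9 (which, together with Theorem 10, Lemma 11, and Corollary 12, remains valid under the sub-Gaussian noise model \eqref{eq:sub_gauss}), an $O_\P(\cdot)$ bound on the sub-Gaussian complexity $\max_{x \in \cS_G(1)} \epsilon^\top x / \|x\|_2^{1-w/2}$ for some $0 < w < 2$ translates directly into an $O_\P(\cdot)$ bound on the MSE of $\htheta_G$, with a matching prescription for $\lambda$. So the entire task reduces to controlling this complexity for an \emph{arbitrary} connected graph $G$, and the key realization is that Lemma \ref{lem:dfs} lets us do this uniformly, by comparison to a chain.

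Concretely, I would proceed as follows. First, since $G$ is connected, $\row(\nabla_G) = \spa\{\one\}^\perp$, and this coincides with $\row(\nabla_C)$ for any DFS-induced chain $C = (V, E_C)$ of $G$, since the associated permutation $P$ only relabels coordinates. Second, the embedding bound \eqref{eq:dfs_l1} gives $\|\nabla_{\mathrm{1d}} P x\|_1 \le 2\|\nabla_G x\|_1$, so $\cS_G(1) \subseteq \cS_C(2)$, and hence
\[
\max_{x \in \cS_G(1)} \frac{\epsilon^\top x}{\|x\|_2^{1-w/2}} \;\le\; \max_{x \in \cS_C(2)} \frac{\epsilon^\top x}{\|x\|_2^{1-w/2}}.
\]
Third, I would take $w = 1$ and reduce the right-hand side to the plain chain graph: substituting $x = P^\top u$ turns the constraint $\|\nabla_{\mathrm{1d}} P x\|_1 \le 2$ into $\|\nabla_{\mathrm{1d}} u\|_1 \le 2$, replaces $\epsilon^\top x$ by $(P\epsilon)^\top u$, and leaves $\|x\|_2 = \|u\|_2$ unchanged; a further rescaling $u = 2v$ reduces the constraint to $\|\nabla_{\mathrm{1d}} v\|_1 \le 1$ at the cost of an absolute multiplicative constant. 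Because the errors are i.i.d., $P\epsilon$ is equal in law to $\epsilon$, so up to that constant this complexity has the same distribution as $\max\{\, \epsilon^\top v / \|v\|_2^{1/2} : \one^\top v = 0,\ \|\nabla_{\mathrm{1d}} v\|_1 \le 1 \,\}$, which is the sub-Gaussian complexity of the ordinary chain and is $O_\P(n^{1/4})$ by Theorem 10, Lemma 11, and Corollary 12 of \citet{wang2016trend}. Feeding this into their Lemma 9, with $\lambda \asymp t^{-1/3} n^{1/3}$, produces the rate $O_\P(t^{2/3} n^{-2/3})$.

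The substantive step is the first reduction, namely that the sub-Gaussian complexity of $\cS_G(1)$ for a general connected graph is no larger than that of a chain. This is exactly where Lemma \ref{lem:dfs} does the work; without it, one would be forced into a case-by-case entropy analysis of $\cS_G(1)$ depending on the structure of $G$ (as in \citet{wang2016trend}'s treatment of grids, regular graphs, and the like). Everything after that — the coordinate relabeling, the rescaling, the use of permutation invariance in law, and the citation of the known chain-graph bound — is routine bookkeeping, and even the precise value of the intervening constant is immaterial since it is absorbed into the $O_\P$.
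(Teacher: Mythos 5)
Your proposal is correct and follows essentially the same route as the paper: reduce to the sub-Gaussian complexity of $\cS_G(1)$ via Lemma 9 of \citet{wang2016trend}, use Lemma \ref{lem:dfs} together with $\row(\nabla_G)=\spa\{\one\}^\perp=\row(\nabla_C)$ to embed $\cS_G(1)\subseteq \cS_C(2)$, and then pass to the plain chain by permutation invariance in law of $\epsilon$ and a rescaling, citing the chain-graph bound of $O_\P(n^{1/4})$ with $w=1$. The only cosmetic difference is where the absolute constant from the rescaling is placed, which is immaterial to the $O_\P$ conclusion.
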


In a sense, the above theorem suggests that the chain graph is
among the hardest graphs for denoising bounded variation signals,
since the fused lasso estimator on any connected graph $G$
will achieve an MSE rate in that is at least as good as in the chain 
rate, if not better. In this vein, it is worth emphasizing that the
MSE bound in \eqref{eq:bv_bound} is not tight for certain graph
structures; a good example is the 2d grid, where we must compare 
\eqref{eq:bv_bound} from the theorem to the known MSE bound
in \eqref{eq:grid_bound} from \citet{hutter2016optimal}, the latter
being only log factors from optimal, as shown in
\citet{sadhanala2016total}. It is natural for the 2d grid graph 
to consider the scaling \smash{$t \asymp \sqrt{n}$} (as argued in 
\citet{sadhanala2016total}), in which case the rates for the fused 
lasso estimator are \smash{$n^{-1/3}$} from Theorem \ref{thm:bv_bound} 
versus \smash{$(\log^2{n}) n^{-1/2}$} from \citet{hutter2016optimal}.

\subsection{Minimax lower bound over trees}

We derive a lower bound for the MSE over the class \smash{$\BV_G(t)$}
when $G$ is a tree graph. The proof applies Assouad's Lemma
\citep{yu1997assouad}, over a  
discrete set of probability measures constructed by a careful
partitioning of the vertices of $G$, that balances both the sizes of
each partition element and the number of edges crossing in between
partition elements. It is deferred until Appendx \ref{app:bv_minimax}.  

\begin{theorem}
\label{thm:bv_minimax}
Consider a data model \eqref{eq:model}, with i.i.d.\ 
Gaussian errors $\epsilon_i \sim N(0,\sigma^2)$, $i=1,\ldots,n$, and 
with \smash{$\theta_0 \in \BV_G(t)$}, where $G$ is a tree graph,
having maximum degree \smash{$d_{\mathrm{max}}$}.  Then there exists 
absolute constants $N,C > 0$, such that for 
\smash{$n / (t d_{\mathrm{max}}) > N$},    
\begin{equation}
\label{eq:bv_minimax}
\inf_{\htheta} \; \sup_{\theta_0 \in \BV_G(t)} \;
\E \| \htheta - \theta_0 \|_n^2  \geq C 
\bigg( \frac{t}{\sigma d_{\mathrm{max}}^2 n} \bigg)^{2/3}. 
\end{equation}
\end{theorem}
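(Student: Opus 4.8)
I would prove the lower bound by Assouad's Lemma \citep{yu1997assouad}, applied to a hypercube of piecewise-constant hypotheses, each hypothesis being constant on the blocks of a balanced connected partition of the tree $G$. The argument breaks into three parts: (i) a combinatorial partitioning step; (ii) verifying that the hypotheses lie in \smash{$\BV_G(t)$} and are mutually indistinguishable; and (iii) optimizing the number of blocks. Throughout, $C, c_0, c'$ denote absolute constants.

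\emph{Partitioning.} The combinatorial heart of the argument is the claim that for a tree $G$ on $n$ vertices with maximum degree \smash{$d_{\mathrm{max}}$}, and any integer $k$ with $2 \le k$ and \smash{$k\,d_{\mathrm{max}} \le n$}, one can partition $V = V_1 \cup \cdots \cup V_k$ so that each induced subgraph $G[V_j]$ is connected, \smash{$\max_j |V_j| \le C\, d_{\mathrm{max}}\, n/k$}, and exactly $k-1$ edges of $G$ run between distinct blocks. This follows from a greedy carving: root $G$, scan the vertices in post-order while maintaining the number of as-yet-unassigned descendants of the current vertex, and split off a new block each time this count first reaches $n/k$; since a vertex has at most \smash{$d_{\mathrm{max}}$} children, every block so created has size in \smash{$[n/k,\, d_{\mathrm{max}}\, n/k)$}, and the residual component containing the root (which has size below $n/k$) is merged into an adjacent block. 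Contracting each block to a point shows the cut has exactly $k-1$ edges because $G$ is a tree; the appearance of \smash{$d_{\mathrm{max}}$} in the size bound is intrinsic, not an artifact, since a star graph admits no connected partition with all blocks of size $o(n)$.

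\emph{Hypotheses.} Given such a partition, for \smash{$\omega \in \{0,1\}^k$} I set \smash{$\theta_\omega = \delta \sum_{j=1}^k \omega_j\, \one_{V_j} \in \R^n$}, with $\delta > 0$ to be chosen. Intra-block differences of \smash{$\theta_\omega$} vanish, and each of the $k-1$ crossing edges contributes at most $\delta$, so \smash{$\|\nabla_G \theta_\omega\|_1 \le \delta(k-1) \le \delta k$}; taking $\delta \le t/k$ forces \smash{$\theta_\omega \in \BV_G(t)$}. If $\omega, \omega'$ differ only in coordinate $j$, then \smash{$\|\theta_\omega - \theta_{\omega'}\|_2^2 = \delta^2 |V_j|$}, so under the Gaussian model \smash{$\mathrm{KL}(P_{\theta_\omega}\,\|\,P_{\theta_{\omega'}}) = \delta^2 |V_j|/(2\sigma^2) \le C\,\delta^2 d_{\mathrm{max}}\, n/(2\sigma^2 k)$}. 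Since the squared-error loss splits over the blocks, \smash{$\|\htheta - \theta_\omega\|_2^2 \ge \sum_j |V_j|\,(\bar\htheta_{V_j} - \delta\omega_j)^2$}, where \smash{$\bar\htheta_{V_j}$} is the block mean, and Assouad's Lemma yields
\begin{equation*}
\inf_{\htheta}\; \sup_{\theta_0 \in \BV_G(t)}\; \E\|\htheta - \theta_0\|_n^2 \;\ge\; \frac{c_0}{n} \sum_{j=1}^k \delta^2 |V_j| \, \Big(1 - \max_{\omega \sim \omega'} \|P_{\theta_\omega} - P_{\theta_{\omega'}}\|_{\mathrm{TV}}\Big),
\end{equation*}
the maximum ranging over cube vertices \smash{$\omega \sim \omega'$} that differ in a single coordinate. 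Using \smash{$\sum_j |V_j| = n$}, and noting that if \smash{$\delta^2 d_{\mathrm{max}}\, n/(\sigma^2 k)$} is at most a small absolute constant then Pinsker's inequality keeps every total-variation term below $\tfrac12$, the right-hand side is at least $(c_0/2)\,\delta^2$.

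\emph{Optimization.} It remains to maximize $\delta^2$ subject to $\delta \le t/k$ and \smash{$\delta^2 \le c'\sigma^2 k/(d_{\mathrm{max}}\, n)$}. Equating the two binding constraints gives \smash{$k \asymp (t^2 d_{\mathrm{max}}\, n/\sigma^2)^{1/3}$} and hence \smash{$\delta^2 \asymp \sigma^{4/3} t^{2/3} d_{\mathrm{max}}^{-2/3} n^{-2/3}$}, so the minimax MSE over \smash{$\BV_G(t)$} is bounded below by a quantity of order \smash{$\sigma^{4/3}\big(t/(d_{\mathrm{max}}\, n)\big)^{2/3}$}, which exhibits the \smash{$t^{2/3} n^{-2/3}$} scaling asserted in \eqref{eq:bv_minimax}, up to the exact polynomial dependence on $\sigma$ and \smash{$d_{\mathrm{max}}$}. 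The hypothesis \smash{$n/(t\,d_{\mathrm{max}}) > N$} is used precisely to guarantee that this choice of $k$ obeys $2 \le k$ and \smash{$k\,d_{\mathrm{max}} \le n$}, so that the partition exists with nonempty blocks. The main obstacle is the partitioning step — arranging the blocks to be simultaneously connected, balanced up to the \smash{$d_{\mathrm{max}}$} factor, and separated by only $O(k)$ edges; once it is available, the hypercube construction, the Kullback--Leibler bound, and the Assouad bookkeeping are all routine.
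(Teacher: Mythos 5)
Your proposal is correct and follows essentially the same route as the paper: a balanced connected partition of the tree into blocks of size between $k$ and $O(d_{\mathrm{max}}k)$ separated by few crossing edges (the paper's Lemma \ref{lem:part_tree}, proved there by recursively carving off the smallest single-edge-severed subtree of size at least $k$, versus your post-order scan), followed by Assouad's Lemma over a hypercube of block-constant signals (the paper's Lemma \ref{lem:part_lower}). The one substantive difference is that you use unnormalized bumps $\delta\one_{V_j}$, so the per-coordinate KL divergence is governed by the largest block, whereas the paper scales the bump on $S_i$ by $1/\sqrt{|S_i|}$ to equalize the KL across coordinates; as you yourself flag, this changes the powers of $\sigma$ and $d_{\mathrm{max}}$ in the final bound (you obtain $\sigma^{4/3}\big(t/(d_{\mathrm{max}} n)\big)^{2/3}$ rather than the stated $\big(t/(\sigma d_{\mathrm{max}}^2 n)\big)^{2/3}$, so the two bounds are incomparable in general), but the $t^{2/3}n^{-2/3}$ rate in \eqref{eq:bv_minimax} is the same and the argument is sound.
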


The theorem demonstrates that, for trees of bounded degree, such as
the chain and balanced $d$-ary trees, the fused lasso estimator over 
the tree achieves achieves the minimax rate, as does the DFS fused
lasso.  

\section{Analysis for signals with bounded differences} 
\label{sec:bd}

We assume that the underlying mean $\theta_0$ 
in \eqref{eq:model} satisfies \smash{$\theta_0 \in \BD_G(s)$} for 
a generic connected graph $G$.  We analyze the MSE of the DFS fused
lasso, as well as (a particular formulation of) wavelet denoising over
$G$. We again establish a lower bound on the minimax MSE when $G$  
is a tree. 

\subsection{The DFS fused lasso}

As it was for the bounded variation case, the analysis for the DFS
fused lasso estimator is straightforward. By assumption,
\smash{$\|\nabla_G \theta_0\|_0 \leq  
  s$}, thus \smash{$\|\nabla_{\mathrm{1d}} P \theta_0\|_0 \leq  
  2s$} by \eqref{eq:dfs_l0} in Lemma \ref{lem:dfs}, and we may   
think of our model \eqref{eq:model} as having i.i.d.\ data  
$Py$ around \smash{$P\theta_0 \in \BD_{\mathrm{1d}}(2s)$}.  Applying
an existing result on the 1d fused lasso for bounded differences 
signals, as described in \eqref{eq:bd_bound_1d}, from
\citet{lin2016approximate}, gives the following result.

\begin{theorem}
\label{thm:bd_bound_dfs}
Consider a data model \eqref{eq:model}, with i.i.d.\ 
sub-Gaussian errors as in \eqref{eq:sub_gauss}, and
\smash{$\theta_0 \in \BD_G(s)$}, for a connected graph $G$.
Consider an arbitrary DFS ordering of $G$,
that defines a permutation matrix $P$ and the DFS fused lasso
estimator \smash{$\htheta_{\mathrm{DFS}}$} in
\eqref{eq:fused_lasso_dfs}. 
Denote by \smash{$W_n= \min \{ |i-j| : (\nabla_{\mathrm{1d}} P
  \theta_0)_i\not=0, (\nabla_{\mathrm{1d}} P \theta_0)_j\not=0 \}$}
the minimum distance between positions, measured along 
the DFS-induced chain, at which nonzero 
differences or jumps occur in $\theta_0$.  Then, under 
a choice of tuning parameter \smash{$\lambda \asymp (nW_n)^{1/4}$}, 
the DFS fused lasso estimator has MSE converging in probability at the 
rate 
\begin{equation}
\label{eq:bd_bound_dfs}
\|\htheta_{\mathrm{DFS}} - \theta_0\|_n^2 = O_\P\bigg(\frac{s}{n} 
\Big((\log{s}+\log\log{n})\log{n} + \sqrt{n/W_n}\Big)\bigg).
\end{equation}
Hence, if the $s$ jumps along the DFS chain are evenly spaced
apart, i.e., $W_n \asymp n/s$, then for
\smash{$\lambda \asymp \sqrt{n}s^{-1/4}$},
\begin{equation}
\label{eq:bd_bound_dfs_no_wn} 
\|\htheta_{\mathrm{DFSr}} - \theta_0\|_n^2 = O_\P\bigg(
\frac{s(\log{s}+\log\log{n})\log{n}}{n} + 
\frac{s^{3/2}}{n} \bigg). 
\end{equation}
\end{theorem}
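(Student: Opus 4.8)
The plan is to deduce this theorem almost immediately from Lemma \ref{lem:dfs} together with the already-established one-dimensional bound \eqref{eq:bd_bound_1d} of \citet{lin2016approximate}, by reducing the problem over $G$ to a problem over the DFS-induced chain. The only work is to check that the reduction preserves all the hypotheses needed to invoke the 1d result, and to track how the constants and the tuning-parameter calibration transfer.

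First I would use the cut-metric inequality \eqref{eq:dfs_l0} in Lemma \ref{lem:dfs}: since $\theta_0 \in \BD_G(s)$, i.e.\ $\|\nabla_G\theta_0\|_0 \le s$, we get $\|\nabla_{\mathrm{1d}} P\theta_0\|_0 \le 2\|\nabla_G\theta_0\|_0 \le 2s$, so $P\theta_0 \in \BD_{\mathrm{1d}}(2s)$. Next I would note that $Py = P\theta_0 + P\epsilon$, and that $P\epsilon$ has the same law as $\epsilon$ (the components of $\epsilon$ being i.i.d.), hence i.i.d.\ mean-zero sub-Gaussian components as in \eqref{eq:sub_gauss}; thus the permuted data $Py$ obeys exactly the 1d data model \eqref{eq:model} with underlying signal $P\theta_0$. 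Finally, by the definition \eqref{eq:fused_lasso_dfs} we have $\htheta_{\mathrm{DFS}} = P^\top \htheta_{\mathrm{1d}}$, where $\htheta_{\mathrm{1d}}$ is the 1d fused lasso solution \eqref{eq:fused_lasso_1d} fit to $Py$; since $P$ is orthogonal, $\|\htheta_{\mathrm{DFS}} - \theta_0\|_n^2 = \|P^\top\htheta_{\mathrm{1d}} - \theta_0\|_n^2 = \|\htheta_{\mathrm{1d}} - P\theta_0\|_n^2$, which moves the entire estimation problem onto the chain.

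At this point I would apply \eqref{eq:bd_bound_1d} directly, with the sparsity $2s$ in place of $s$ and with the minimum jump spacing being $W_n$ as defined in the theorem statement (which is precisely the min-gap of the jumps of $P\theta_0$ along the chain). Taking $\lambda \asymp (nW_n)^{1/4}$ as prescribed there, and absorbing the factor of $2$ and $\log(2s) \asymp \log s$ into constants, gives \eqref{eq:bd_bound_dfs}. The specialization \eqref{eq:bd_bound_dfs_no_wn} then follows by substituting $W_n \asymp n/s$: this makes $\sqrt{n/W_n} \asymp \sqrt{s}$, so the $\sqrt{n/W_n}$ term contributes $(s/n)\sqrt{s} \asymp s^{3/2}/n$, and $\lambda \asymp (n\cdot n/s)^{1/4} = \sqrt{n}\,s^{-1/4}$, matching the stated choice. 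There is no genuine analytic obstacle here; the one point requiring care is conceptual rather than technical, namely that $W_n$ is intrinsically tied to the particular DFS ordering of $G$, so the theorem must keep $W_n$ as a free quantity and cannot in general claim $W_n \asymp n/s$ — the even-spacing hypothesis used in \eqref{eq:bd_bound_dfs_no_wn} is a real assumption on how the jumps of $\theta_0$ land along the induced chain, not a byproduct of the construction. A secondary check is that the rate in \eqref{eq:bd_bound_1d} depends on the sparsity only linearly and logarithmically, so that replacing $s$ by $2s$ affects only constants.
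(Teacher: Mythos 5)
Your proposal is correct and matches the paper's own argument: the paper likewise invokes the cut-metric bound \eqref{eq:dfs_l0} of Lemma \ref{lem:dfs} to place $P\theta_0$ in $\BD_{\mathrm{1d}}(2s)$, treats $Py$ as i.i.d.\ data around $P\theta_0$ on the chain, and then applies the 1d result \eqref{eq:bd_bound_1d} of \citet{lin2016approximate}. Your additional checks (that $P\epsilon$ is equal in law to $\epsilon$, that the permutation preserves the MSE, and that the factor of $2$ in the sparsity is absorbed into constants) are exactly the details the paper leaves implicit.
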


An undesirable feature of applying existing 1d
fused lasso results for signals with bounded differences, in the above 
result, is the dependence on $W_n$ in the DFS fused lasso error bound  
\eqref{eq:bd_bound_dfs} (we applied the  result \eqref{eq:bd_bound_1d}   
from \citet{lin2016approximate}, but the bounds from
\citet{dalalyan2014prediction} also depend on $W_n$, and as far as we 
can tell, so should any analysis of the 1d fused lasso for signals
with bounded differences). In the 1d setting, assuming that $W_n
\asymp n/s$, which says that jumps in $\theta_0$ occur at roughly
equally 
spaced positions, is fairly reasonable; but to assume the same when
the jumps are measured with respect to the DFS-induced chain, as we
must in order to establish \eqref{eq:bd_bound_dfs_no_wn}, is perhaps
not. Even if the differences apparent in $\theta_0$ over edges in $G$
are somehow (loosely speaking) spaced far apart, running
DFS could well produce an ordering such that jumps in $P\theta_0$
occur at positions very close together.
We reiterate that the MSE bounds for the DFS fused lasso for
bounded variation signals, in Theorem \ref{thm:bv_bound_dfs}, do not
suffer from any such complications.

\subsection{Graph wavelet denoising}

We compare the performances of the DFS fused lasso and wavelet
denoising using spanning tree wavelets, for
signals with bounded differences. For spanning tree wavelets, the 
construction starts with a spanning tree and carefully defines a 
hierarchical  
decomposition by recursively finding and splitting around a balancing
vertex, which is a vertex whose adjacent subtrees are of size
at most half of the original tree; this decomposition is used to
construct an unbalanced Haar wavelet basis, as in
\citet{singh2010detecting}. In \citet{sharpnack2013detecting}, it was
shown that for any connected graph $G$, the constructed wavelet basis 
$W \in \R^{n\times n}$ satisfies    
\begin{equation}
\label{eq:wavelet_l0}
\|W \theta \|_0 \le \ceil*{\log d_{\mathrm{max}}}  
\ceil*{\log n} \| \nabla_G \theta \|_0,  
\quad \text{for all} \;\, \theta \in \R^n,
\end{equation}
where \smash{$d_{\mathrm{max}}$} is the maximum degree of $G$, and the
above holds regardless of choice of spanning tree in the wavelet
construction. Now consider the wavelet denoising estimator 
\begin{equation}
\label{eq:wavelet}
\htheta_W = \argmin_{\theta \in \R^n} \; \half  \|y - \theta\|_2^2 + 
\lambda\|W \theta\|_1.   
\end{equation}
The following is an immediate consequence of \eqref{eq:wavelet_l0},
the fact that the wavelet basis $W$ is orthonormal, and standard
results about soft-thresholding (e.g., Lemma 2.8 in
\citep{johnstone2011gaussian}).    
 
\begin{theorem}
\label{thm:bd_bound_wav}
Consider a data model \eqref{eq:model}, with i.i.d.\ 
Gaussian errors $\epsilon_i \sim N(0,\sigma^2)$, $i=1,\ldots,n$, and 
with \smash{$\theta_0 \in \BV_G(t)$}, where $G$ is a connected graph, 
having maximum degree \smash{$d_{\mathrm{max}}$}. Then the spanning
tree wavelet estimator \smash{$\htheta_W$} in \eqref{eq:wavelet}, with
a choice \smash{$\lambda \asymp \sqrt{\log{n}}$}, has MSE converging
in expectation at the rate
 \begin{equation}
\label{eq:bd_bound_wav} 
\E \|\htheta_W - \theta_0\|_n^2 = O 
\bigg( \frac{s \log d_{\mathrm{max}}\log^2{n}}{n} \bigg).
\end{equation}
\end{theorem}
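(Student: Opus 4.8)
The statement is meant for the bounded differences class, i.e.\ $\theta_0\in\BD_G(s)$ (consistent with the sparsity $s$ appearing in \eqref{eq:bd_bound_wav}); with that reading, the plan is to pass to the wavelet coefficient domain, where the problem becomes estimation of a sparse mean vector in white Gaussian noise, and then invoke a standard soft-thresholding risk bound.

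First I would use orthonormality of $W$. Since $\|y-\theta\|_2^2=\|Wy-W\theta\|_2^2$, the objective in \eqref{eq:wavelet} separates across coordinates, so $W\htheta_W=\eta_\lambda(Wy)$, where $\eta_\lambda$ denotes coordinate-wise soft-thresholding at level $\lambda$. Writing $\mu_0=W\theta_0$ and $z=W\epsilon$, the vector $z\sim N(0,\sigma^2 I_n)$ (an orthonormal transform of i.i.d.\ Gaussian noise is i.i.d.\ Gaussian), and, applying orthonormality once more,
\[
\|\htheta_W-\theta_0\|_2^2=\|\eta_\lambda(Wy)-\mu_0\|_2^2=\|\eta_\lambda(\mu_0+z)-\mu_0\|_2^2 .
\]

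Next I would bound the sparsity of $\mu_0$: by \eqref{eq:wavelet_l0} and $\|\nabla_G\theta_0\|_0\le s$,
\[
\|\mu_0\|_0=\|W\theta_0\|_0\le\ceil*{\log d_{\mathrm{max}}}\ceil*{\log n}\,\|\nabla_G\theta_0\|_0\le\ceil*{\log d_{\mathrm{max}}}\ceil*{\log n}\,s=:k ,
\]
so $k=O(s\log d_{\mathrm{max}}\log n)$. Then, with the prescribed choice $\lambda\asymp\sigma\sqrt{\log n}$ (say $\lambda=\sigma\sqrt{2\log n}$), the classical soft-thresholding oracle inequality in the Gaussian sequence model (Lemma 2.8 of \citet{johnstone2011gaussian}, essentially the Donoho--Johnstone bound) yields
\[
\E\,\|\eta_\lambda(\mu_0+z)-\mu_0\|_2^2\le(2\log n+1)\Big(\sigma^2+\sum_{i=1}^n\min(\mu_{0,i}^2,\sigma^2)\Big).
\]
Since $\mu_0$ is $k$-sparse, $\sum_i\min(\mu_{0,i}^2,\sigma^2)\le k\sigma^2$, so the right-hand side is $O(\sigma^2(k+1)\log n)=O(\sigma^2 k\log n)$; dividing by $n$ gives $\E\|\htheta_W-\theta_0\|_n^2=O(\sigma^2 s\log d_{\mathrm{max}}\log^2 n/n)$, which is \eqref{eq:bd_bound_wav}.

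I do not expect a genuine obstacle — the paper itself calls this ``an immediate consequence'' of \eqref{eq:wavelet_l0}, orthonormality of $W$, and soft-thresholding theory. The only points needing care are: checking that \eqref{eq:wavelet} reduces exactly to coordinate-wise soft-thresholding under an orthonormal basis (one line); quoting the oracle inequality with the threshold normalized so that the powers of $\log n$ come out correctly; and observing that if the coarsest scaling coefficient is left unthresholded by convention, it contributes only $O(\sigma^2)$, which is absorbed into the stated rate. As an alternative to the oracle inequality, one can bound the bias and variance of soft-thresholding directly on the $k$-sparse vector $\mu_0$; either route is routine.
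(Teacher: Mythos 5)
Your proposal is correct and follows exactly the route the paper intends (and only sketches): orthonormality of $W$ reduces \eqref{eq:wavelet} to coordinate-wise soft-thresholding in the wavelet domain, the sparsity bound \eqref{eq:wavelet_l0} controls $\|W\theta_0\|_0$, and Lemma 2.8 of \citet{johnstone2011gaussian} delivers the rate. You are also right that the hypothesis should read $\theta_0\in\BD_G(s)$ rather than $\BV_G(t)$; that is a typo in the theorem statement.
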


The result in \eqref{eq:bd_bound_wav} has the advantage over the DFS
fused lasso result in \eqref{eq:bd_bound_dfs} that it does not depend
on a hard-to-interpret quantity like $W_n$, the minimum spacing
between jumps along the DFS-induced chain.  But when (say) 
\smash{$d_{\mathrm{max}} \asymp 1$}, $s \asymp 1$, and we are willing
to assume that $W_n \asymp n$ (meaning the jumps of $\theta_0$ occur
at positions evenly spaced apart on the DFS chain), we can see that
the spanning tree wavelet rate in \eqref{eq:bd_bound_wav} is just
slightly slower than the DFS fused lasso rate in
\eqref{eq:bd_bound_dfs_no_wn}, by a factor of
\smash{$\log{n}/\log\log{n}$}.  

While the comparison between the DFS fused lasso and wavelet rates, 
\eqref{eq:bd_bound_dfs} and \eqref{eq:bd_bound_wav}, show
an advantage to spanning tree wavelet denoising, as it does not
require assumptions about the spacings between nonzero differences in 
$\theta_0$, we have found nonetheless that the DFS fused lasso to
performs well in practice compared to spanning tree wavelets, and
indeed often outperforms the latter in terms of MSE.  Experiments 
comparing the two methods are presented Section
\ref{sec:experiments}. 

\subsection{Minimax lower bound for trees}
\label{sec:bd_minimax}

We now derive a lower bound for the MSE over the class
\smash{$\BD_G(s)$} when $G$ is a tree graph. The proof relates the
current denoising problem to one of estimating sparse normal means,
with a careful construction of the sparsity set using degree
properties of trees. It is deferred until Appendix
\ref{app:bd_minimax}.   

\begin{theorem}
\label{thm:bd_minimax}
Consider a data model \eqref{eq:model}, with i.i.d.\ 
Gaussian errors \smash{$\epsilon_i \sim N(0,\sigma^2)$},
$i=1,\ldots,n$, and with \smash{$\theta_0 \in \BD_G(s)$}, where $G$ is
a tree. Then there are absolute constants $N,C > 0$, such that for $n
/ s > N$,    
\begin{equation}
\label{eq:bd_minimax}
\inf_{\htheta} \; \sup_{\theta_0 \in \BD_G(s)} \;
\E \| \htheta - \theta_0 \|_n^2 \geq C \sigma^2
\frac{s}{n} \log \Big( \frac{n}{s} \Big).
\end{equation}
\end{theorem}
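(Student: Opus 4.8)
The plan is to exhibit a rich finite subfamily of $\BD_G(s)$ on which the denoising problem reduces to a sparse normal means problem (more precisely, to a product of independent ``locate the spike'' problems), and then invoke the classical minimax lower bound for the latter. The first step is to use degree properties of the tree to isolate many ``cheap'' vertices. Since $G$ is a tree, $\sum_{v} \deg(v) = 2(n-1)$, so the set $V' = \{v : \deg(v) \le 4\}$ satisfies $|V'| \ge 3n/5$. The induced subgraph $G[V']$ is a forest, hence bipartite, so it has an independent set $I \subseteq V'$ with $|I| \ge |V'|/2 \ge n/4$; the edges of $G$ among vertices of $V'$ are exactly those of $G[V']$, so $I$ is independent in $G$. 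Thus the vertices of $I$ are pairwise nonadjacent in $G$ and each has $G$-degree at most $4$.

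Next I would build the subfamily. Fix the baseline signal $\theta \equiv 0$ and partition $I$ into $k = \lfloor s/4 \rfloor$ blocks $I_1,\dots,I_k$, each of size at least $\lfloor |I|/k\rfloor \ge n/s - 1 \gtrsim n/s$ (this is the only place the hypothesis $n/s > N$ is used: it makes each $|I_j|$ large, so $\log|I_j| \asymp \log(n/s)$). Consider all signals obtained by placing, in each block $I_j$, either nothing or a single spike of common magnitude $\mu_0$ at some chosen vertex $v_j \in I_j$. For any such $\theta$, an edge of $G$ has a nonzero difference only if it is incident to an active vertex; since $I$ is independent and each active vertex has degree at most $4$, at most $4k \le s$ edges are broken, so every such $\theta$ lies in $\BD_G(s)$. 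This family has a product structure over the $k$ blocks, with $\prod_{j}(|I_j|+1)$ members, and sits entirely inside $\BD_G(s)$ (the regime $s<4$, where $\lfloor s/4\rfloor$ is uninformative, is handled separately, e.g.\ by applying the same construction to a single block, or by monotonicity of the minimax risk in $s$).

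Finally I would lower bound the minimax risk over this family. Because the active vertices lie in $I$ and the baseline is $0$, the coordinates of $y$ outside $I$ are pure noise, independent of the parameter; hence, up to a reduction that only decreases the risk, the estimator may be taken to depend on $y_I$ alone, and $\|\htheta - \theta_0\|_2^2 \ge \|(\htheta)_I - (\theta_0)_I\|_2^2$. The resulting experiment factorizes into $k$ independent sub-experiments, the $j$-th being: estimate a mean on $I_j$ that is either $0$ or a single spike of magnitude $\mu_0$ at some $v \in I_j$, from Gaussian observations with variance $\sigma^2$. For an appropriate choice $\mu_0^2 \asymp \sigma^2 \log(n/s)$, a standard many-hypotheses argument (Fano's inequality over the $|I_j|+1$ candidate means, i.e.\ the single-spike case of the sparse normal means lower bound, cf.\ \citet{johnstone2011gaussian}) gives that the minimax squared-error risk of the $j$-th sub-experiment is $\gtrsim \sigma^2 \log|I_j| \asymp \sigma^2 \log(n/s)$. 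Summing the $k \asymp s$ independent contributions yields $\inf_{\htheta}\sup_{\theta_0 \in \BD_G(s)}\E\|\htheta - \theta_0\|_2^2 \gtrsim \sigma^2 s \log(n/s)$, and dividing by $n$ gives \eqref{eq:bd_minimax}.

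The main obstacle is the construction in the first two paragraphs: one needs a subfamily that is simultaneously as large as the sparse normal means instance that produces the target rate (ambient dimension $\asymp n$, sparsity $\asymp s$, signal magnitude $\asymp \sigma\sqrt{\log(n/s)}$) and yet provably contained in $\BD_G(s)$ for an \emph{arbitrary} tree. The degree-pruning-plus-independent-set device is what reconciles these: pruning to bounded degree caps how many edges a single spike can break, and passing to an independent set both keeps the edge count additive over spikes and makes the sub-experiments genuinely independent. The remaining work---bounding $|V'|$, extracting the independent set, choosing $\mu_0$ and the Fano radius, and the small-$s$ boundary case---is routine once this structure is in place.
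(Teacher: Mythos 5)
Your construction is sound and delivers the stated bound; it shares the paper's central idea---a tree has $\Omega(n)$ vertices of bounded degree, so $\Theta(s)$ spikes placed at such vertices cost only $O(s)$ in the cut metric, which reduces the problem to sparse normal means---but it executes the information-theoretic step differently. The paper isolates the at least $n/2$ vertices of degree at most $2$, considers all $\{-1,0,+1\}$-valued vectors supported there with exactly $k \asymp s$ nonzero entries, extracts a Varshamov--Gilbert-type packing of this sparse hypercube (Lemma 4 of \citet{raskutti2011minimax}), and applies a single global Fano bound (Theorem 2.5 of \citet{tsybakov2009introduction}) followed by Markov's inequality. You instead partition an independent set of degree-at-most-$4$ vertices into $\asymp s$ blocks of size $\asymp n/s$, run a needle-in-a-haystack Fano argument within each block, and sum the Bayes risks of the resulting independent sub-experiments. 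Both routes are valid: the paper's outsources the combinatorics to a standard packing lemma and needs no product structure, while yours avoids that lemma at the cost of justifying additivity of the Bayes risk over the product experiment (which does hold, since the prior, likelihood, and loss all factorize over the disjoint blocks). Two small remarks. First, the independent-set extraction is superfluous: the blocks $I_j$ are disjoint, so the sub-experiments are independent regardless of adjacency, and each active vertex breaks at most $\deg(v)\le 4$ edges whether or not active vertices are adjacent (adjacency of equal-valued spikes can only reduce the count). Second, your unresolved boundary case $s<4$ is no worse than the paper's own, whose $k$ (the largest even number at most $s/2$) also vanishes for $s\le 3$.
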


The MSE lower bound in \eqref{eq:bd_minimax} shows that, when we are
willing to assume that $W_n \asymp n/s$ in the DFS-induced chain, the
DFS fused lasso estimator is a $\log\log{n}$ factor away from the
optimal rate, provided that $s$ is not too large, namely
\smash{$s=O((\log{n} \log\log{n})^2)$}.  The spanning tree wavelet
estimator, on the other hand, is a \smash{$\log{n}$} factor from
optimal, without any real restrictions on $s$, i.e., it suffices to
have \smash{$s=O(n^a)$} for some $a>0$.  It is worth remarking that,
for large enough $s$, the lower bound in \eqref{eq:bd_minimax} is
perhaps not very interesting, as in such a case, we may as well
consider the bounded variation lower bound in \eqref{eq:bv_minimax},
which will likely be tighter (faster).

\section{Experiments}
\label{sec:experiments}

In this section we compare experimentally the speed and accuracy of
two approaches for denoising signals on graphs: the graph fused lasso,
and the fused lasso along the chain graph induced by a DFS ordering.
In our experiments, we see that the DFS-based denoiser sacrifices a
modest amount in terms of mean squared error, while providing gains 
(sometimes considerable) in computational speed. This shows that our
main theorem, in addition to providing new insights on MSE rates for
the graph fused lasso, also  
has important practice consequences.  For truly massive problems, 
where the full graph denoising problem is impractical to solve, we may 
use the linear-time DFS fused lasso denoiser, and obtain a favorable 
tradeoff of accuracy for speed. 
 
\subsection{Generic graphs}

We begin by considering three examples of large graphs of (more or
less) generic structure, derived from road networks in three states:  
California, Pennsylvania, and Texas. Data on these road networks are 
freely available at   
\url{https://snap.stanford.edu}.  In these networks, intersections   
and endpoints are represented by nodes, and roads connecting these
intersections or endpoints are represented by undirected edges; see
\cite{leskovec2009community} for more details. For each network, we
use the biggest connected component as our graph 
structure to run comparisons. The graph corresponding to 
California has $n=1957027$ nodes and $m=2760388$ edges, the one for
Pennsylvania has $n=1088092$ nodes and $m=1541898$ edges, and the
graph for Texas has $n=1351137$ nodes 
and $m=1879201$ edges. We compare Laplacian smoothing versus
the fused lasso over a DFS-induced chain, on the graphs from the three
states. We do not compare with the fused lasso over the original
graphs, due to its prohibitive computational cost at such large
scales. 

We used the following procedure to construct a synthetic signal
$\theta_0 \in \R^n$ on each of the road network graphs, of piecewise
constant nature:  
\begin{itemize}
\item an initial seed node $v_1$ is selected uniformly at random from
  the nodes $V=\{1,\ldots,n\}$ in the graph;    
\item a component $C_1$ is formed based on the 
  \smash{$\lfloor n/10 \rfloor$} 
  nodes closest to $v_1$ (where the distance between two
  nodes in the graph is given by the length of the shortest path 
  between them);  
\item a second seed node $v_2$ is selected uniformly at random from
  $G\setminus C_1$;  
\item a component $C_2$ is formed based on the \smash{$\lfloor n/10
    \rfloor$} nodes closest to $v_2$ (again in shortest path
  distance);
\item this process is repeated until we have a partition
  $C_1,\ldots,C_{10}$ of the node set $V$ into components of (roughly)
  equal size, and $\theta_0 \in \R^n$ is defined to take 
  constant values on each of these components. 
\end{itemize}
In our experiments, we considered 20 values of the total variation for
the underlying signal.  For each, the signal $\theta_0$ was scaled 
appropriately to achieve the given total variation value, and data $y
\in \R^n$
was generated by adding i.i.d.\ $N(0,0.2^2)$ noise to the
components of $\theta_0$.  For each data instance $y$, the DFS fused
lasso and Laplacian smoothing estimators, the former defined by
\eqref{eq:fused_lasso_dfs} and the latter by  
\begin{equation}
\label{eq:lap_smooth}
\htheta_{\mathrm{Lap}} \;=\; \argmin_{\theta \in \R^n} \;  
\half \|y - \theta\|_2^2 + \lambda \theta^\top L_G \theta,
\end{equation}
where \smash{$L_G = \nabla_G^\top \nabla_G$} is the Laplacian 
matrix of the given graph $G$, and each estimator is computed over 20 
values of its own tuning parameter.  Then, the value of the tuning
parameter minimizing the average MSE, over 50 draws of data $y$
around $\theta_0$, was selected for each method.  Finally, this
optimized MSE, averaged over the 50 draws of data $y$, and further,
over 10 repetitions of the procedure for constructing the signal
$\theta_0$ explained above, was recorded.  Figure
\ref{fig:state_graphs} displays the optimized MSE for the DFS  
fused lasso and Laplacian smoothing, as the total variation of the
underlying signal varies, for the three road network graphs. 

\begin{figure}[htb]
\centering
\includegraphics[width=0.325\textwidth]{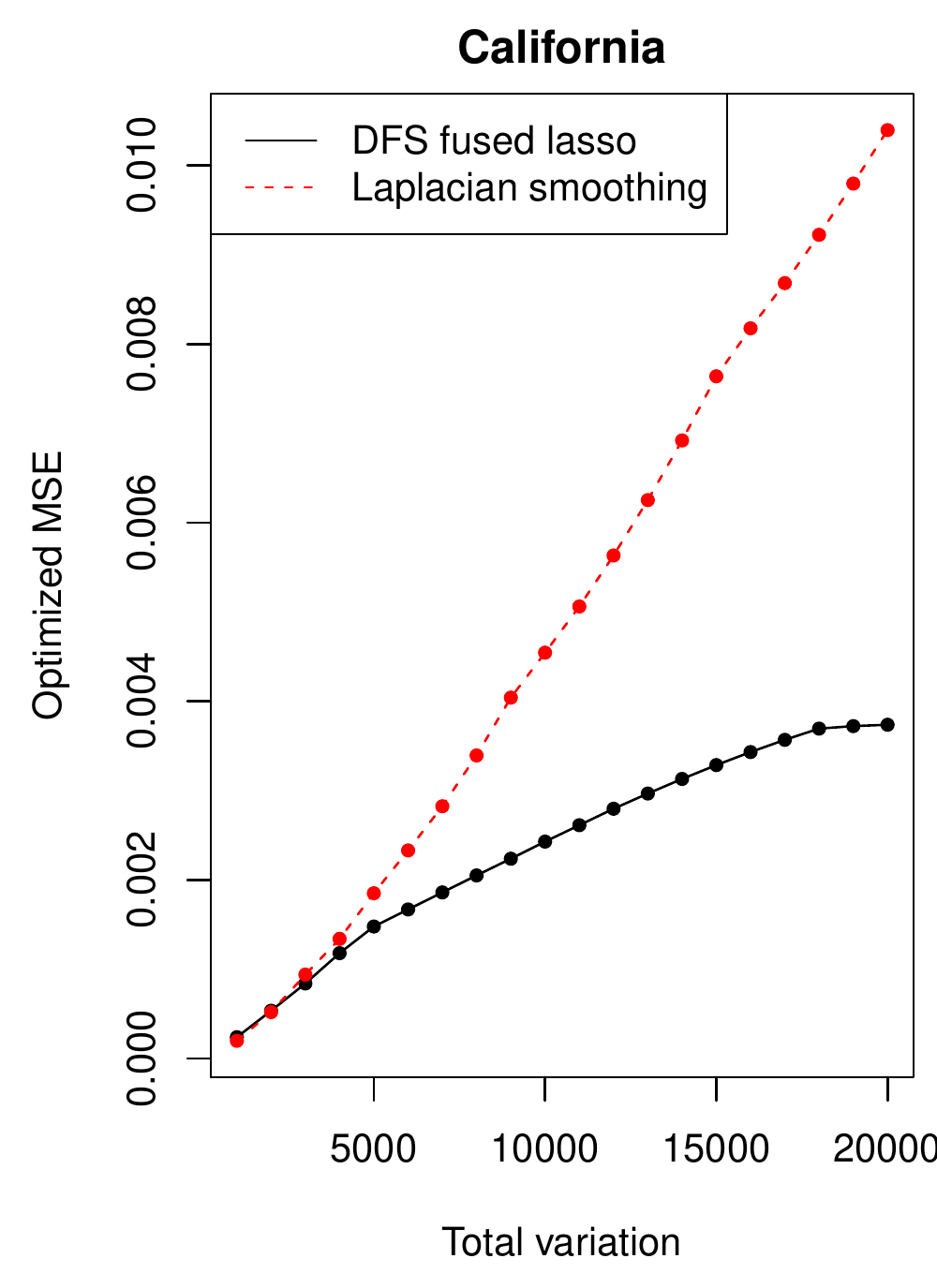} 
\includegraphics[width=0.325\textwidth]{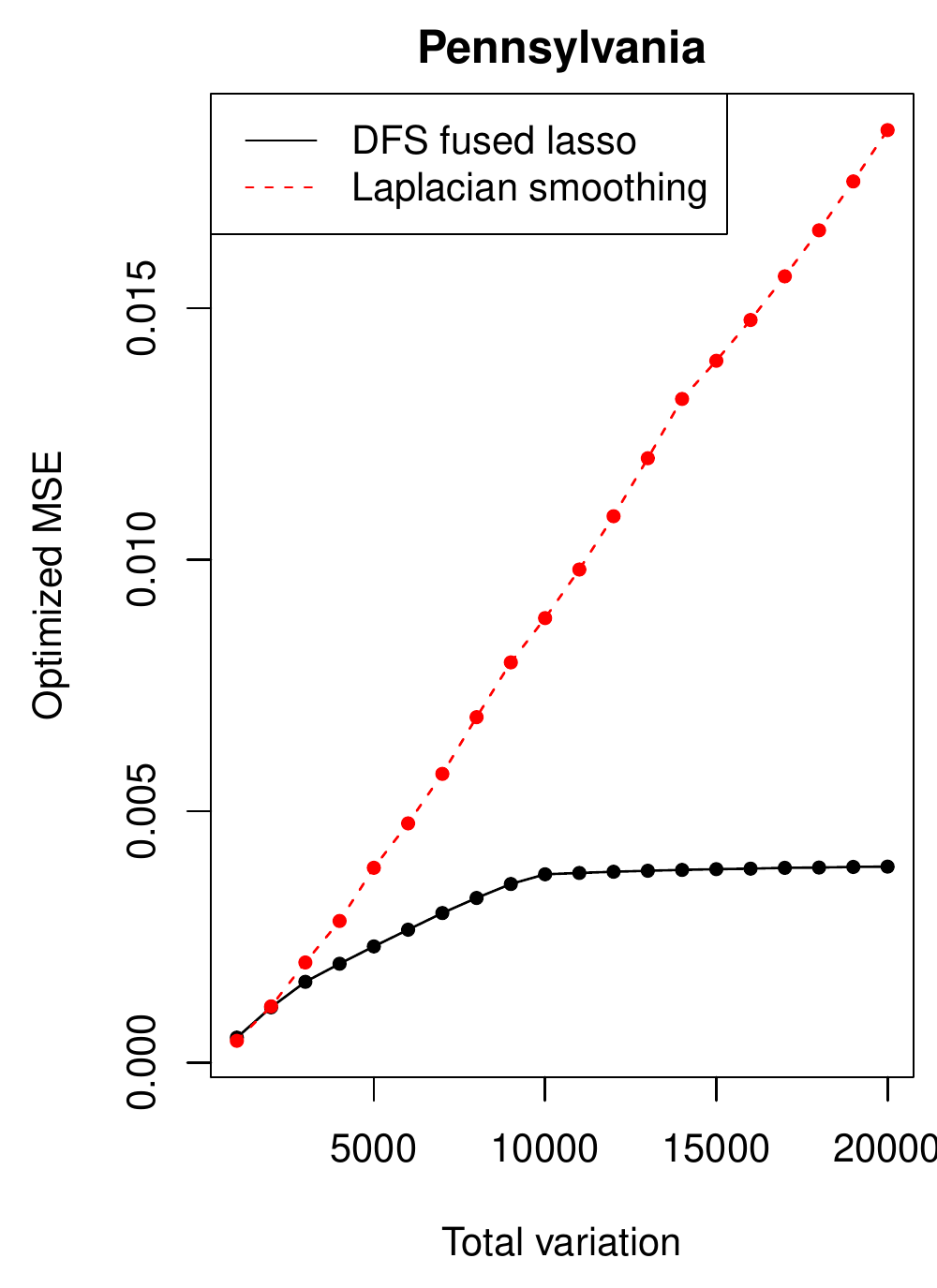} 
\includegraphics[width=0.325\textwidth]{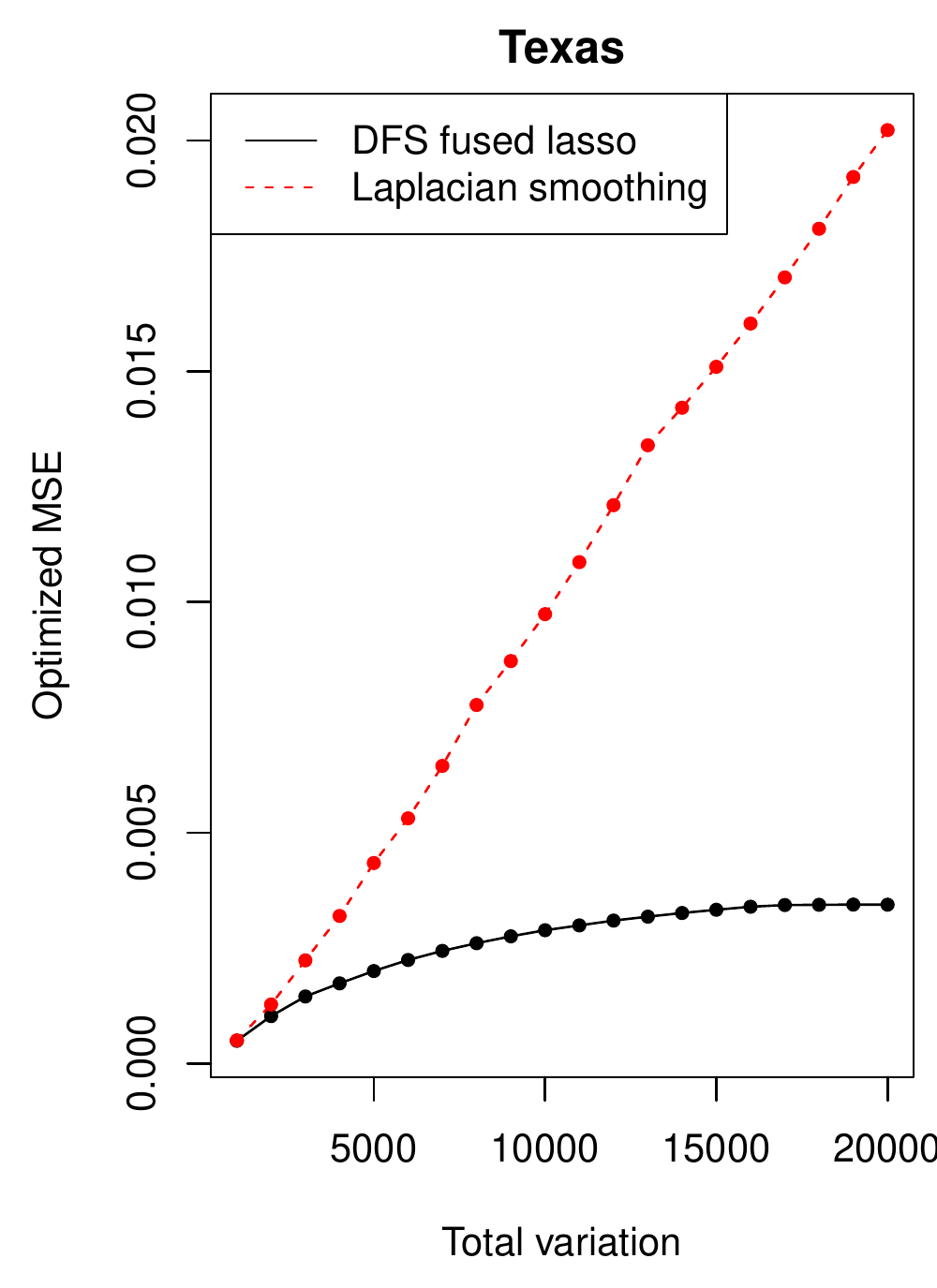} 
\caption{\small\it  The optimized MSE for the DFS fused lasso and 
  Laplacian smoothing (i.e., MSE achieved by these methods under
  optimal tuning) is plotted as a function of the total variation
  of the underlying signal, for each of the three road network
  graphs. This has been averaged over 50 draws of data $y$ for each 
  construction of the underlying signal $\theta_0$, and 10 repetitions 
  in constructing $\theta_0$ itself. For low values of
  the underlying total variation, i.e., low SNR levels, the two methods
  perform about the same, but as the SNR increases, the DFS fused
  lasso outperforms Laplacian smoothing by a considerable margin.}  
\label{fig:state_graphs}
\end{figure}

As we can see from the figure, for low values of the underlying total
variation, i.e., low signal-to-noise ratio (SNR) levels, Laplacian
smoothing and the DFS fused lasso, each tuned to optimality, perform
about the same.  This is because at low enough SNR levels, each will
be approximating $\theta_0$ by something like \smash{$\bar{y} \one$},
with \smash{$\bar{y}$} being the sample average of the data vector
$y$.  But as the SNR increases, we see that the DFS fused lasso
outpeforms Laplacian smoothing by a considerable amount. 
This might seem surprising, as Laplacian smoothing
uses information from the entire graph, whereas the DFS
fused lasso reduces the rich structure of the road network
graph in each case to that of an embedded chain.   
However, Laplacian smoothing is a linear smoother (meaning that
\smash{$\htheta_{\mathrm{Lap}}$} in \eqref{eq:lap_smooth} is a linear
function of the data $y$), and therefore it comes with
certain limitations when estimating signals of bounded variation
(e.g., see the seminal work of \citet{donoho1998minimax}, and the
more recent graph-based work of \citet{sadhanala2016total}).  In
contrast, the DFS fused lasso is a nonlinear estimator, and while it 
discards some information in the original graph structure, it
retains enough of the strong adaptivity properties of the fused
lasso over the original graph to statistically dominate a linear
estimator like Laplacian smoothing. 

Lastly, in terms of computational time, it took an average of 82.67
seconds, 44.02 seconds, and 54.49 seconds to compute the 20 DFS fused
lasso solutions (i.e., over the 20 tuning parameter values) for the
road network graphs from California, Pennsylvania, and Texas,
respectively (the averages are taken over the 50 draws of data $y$
around each signal $\theta_0$, and the 10 repetitions in constructing
$\theta_0$).  By comparison, it took an average of 2748.26 seconds,
1891.97 seconds, and 1487.36 seconds to compute the 20 Laplacian
smoothing solutions for the same graphs.  The computations and timings
were performed on a standard laptop computer (with a 2.80GHz Intel
Core i7-2640M processor). For the DFS fused lasso, in each problem 
instance, we first computed a DFS ordering using the {\tt dfs}
function from the R package {\tt igraph}, which is an R wrapper for a
C++ implementation of DFS, and initialized the algorithm at a random
node for the root. We then computed the appropriate 1d 
fused lasso solutions using the {\tt trendfilter} function from the R
package {\tt glmgen}, which is an R wrapper for a C++ implementation
of the fast (linear-time) dynamic programming algorithm in
\citet{johnson2013dynamic}. For Laplacian smoothing, we
used the {\tt solve} function from the R package {\tt Matrix}, which
is an R wrapper for a C++ implementation of the sparse Cholesky-based 
solver in \citet{davis2009dynamic}.  For such large graphs,
alternative algorithms, such as (preconditioned) conjugate gradient 
methods, could certainly be more efficient in computing Laplacian
smoothing solutions; our reported timings are only meant to indicate
that the DFS fused lasso is efficiently computable at problem sizes
that are large enough that even a simple linear method like Laplacian
smoothing becomes nontrivial. 

\subsection{2d grid graphs}

Next we consider a denoising example on a 2d grid graph of dimension
$1000 \times 1000$, so that the number of nodes is $n=1000000$ and the
number of edges is $m=1998000$.  We generated a synthetic piecewise
constant signal $\theta_0 \in \R^{1000 \times 1000}$ over the 2d grid,
shown in the top left corner of Figure \ref{fig:grid_graph_image},
where a color scale (displayed in the accompanying color legend) is
used, with red denoting the smallest possible value and yellow the
largest possible value. Data $y \in \R^{1000 \times 1000}$ was
generated by adding i.i.d.\ $N(0,1)$ noise to the components of
$\theta_0$, displayed in the top middle panel of Figure
\ref{fig:grid_graph_image}.  We then computed the 2d fused lasso
solution (i.e., the fused lasso solution over the full 2d grid graph),
as well as three DFS-based variations: the DFS fused lasso solution
using a random DFS ordering (given by running DFS beginning at a
random node), labeled as ``1 random DFS'' in the figure; the average
of DFS fused lasso solutions over 5 random DFS orderings, labeled ``5
random DFS'' in the figure; and the average of DFS fused lasso
solutions over 2 ``snake'' DFS orderings (one given by collecting and
joining all horizontal edges and the other all vertical edges) labeled
``2 snake DFS'' in the figure.  The tuning parameter for each method
displayed in the figure was chosen to minimize the average MSE over
100 draws of the data $y$ from the specified model.  Visually, we can
see that the full 2d fused lasso solution is the most accurate,
however, the 1 random DFS, 5 random DFS, and 2 snake DFS solutions all
still clearly capture the structure inherent in the underlying signal.
Of the three DFS variations, the 5 random DFS estimator is visually
most accurate; the 1 random DFS estimator is comparably ``blotchy'',
and the 2 snake DFS estimator is comparably ``stripey''.  

The left panel of \ref{fig:grid_graph_mse} shows the optimized MSE
for each method, i.e., the minimum of the average MSE over 100 draws
of the data $y$, when we consider 20 choices for the tuning parameter.    
This optimized MSE is plotted as a function of the sample size,
which runs from $n=2500$ (a $50\times 50$ grid) to $n=1000000$
(a $1000\times 1000$ grid), and in each case the underlying signal is
formed by taking an appropriate (sub)resolution of the image in the
top left panel of Figure \ref{fig:grid_graph_image}. 
  The 2d fused lasso provides the fastest
decrease in MSE as $n$ grows, followed by the 5 random DFS estimator,
then the 1 random DFS estimator, and the 2 snake DFS estimator.  This
is not a surprise, since the 2d fused lasso uses the information from 
the full 2d grid.  Indeed,
comparing \eqref{eq:grid_bound} and \eqref{eq:bv_bound_dfs}, we recall
that the 2d fused lasso enjoys an MSE rate of \smash{$t
  \log^2{n} /n$} when $\theta_0$ has 2d total variation $t$, whereas
the DFS fused lasso has an MSE rate of only
\smash{$(t/n)^{2/3}$} in this 
setting. When \smash{$t \asymp \sqrt{n}$}, which is a natural scaling
for the underlying total variation in 2d and also the scaling
considered in the experimental setup for the figure, these rates are
\smash{$(\log^2{n}) n^{-1/2}$} for the 2d fused lasso, and
\smash{$n^{-1/3}$} for the DFS fused lasso.  The figure uses a log-log
plot, so the MSE curves all appear
to have linear trends, and the fitted slopes roughly match these
theoretical MSE rates (-0.58 for the 2d fused lasso, and -0.39, -0.40,
and -0.36 for the three DFS variations). 

The right panel of Figure \ref{fig:grid_graph_mse} shows the runtimes
for each method (averaged over 100 draws of the data $y$), as a
function of the sample size $n$.  The runtime for each method counts
the total time taken to compute solutions across 20 tuning parameter
values.  The computations and timings were carried out on a standard
desktop computer (with a 3.40GHz Intel Core i7-4770 processor).  To
compute 2d fused lasso solutions, we used the {\tt TVgen} function in
the Matlab package {\tt proxTV}, which is a Matlab wrapper for a C++
implementation of the proximal stacking technique described in
\citet{barbero2014modular}.  For the DFS fused lasso, we computed
initial DFS orderings using the {\tt dfs} 
function from the Matlab package {\tt MathBGL}, and then, as before,
used the C++ implementation available through {\tt glmgen} to compute
the appropriate 1d fused lasso solutions.  The figure uses a log-log
plot, and hence we can see that all DFS-based estimators are quite a 
bit more efficient than the 2d fused lasso estimator.

\begin{figure}[htbp]
\centering
\begin{tabular}{cccc}
\includegraphics[height=0.28\textwidth]{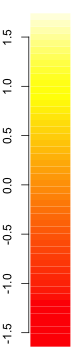} & 
\includegraphics[width=0.28\textwidth]{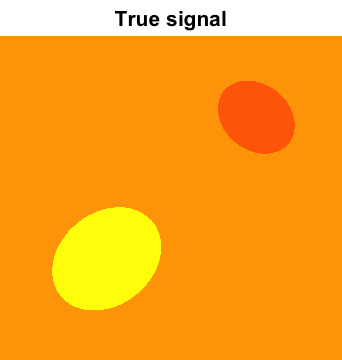} &
\includegraphics[width=0.28\textwidth]{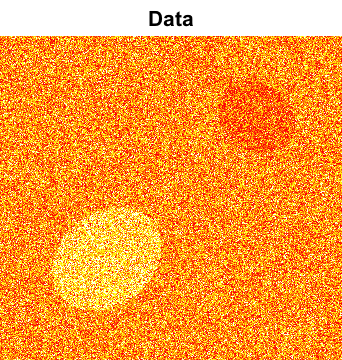} &
\includegraphics[width=0.28\textwidth]{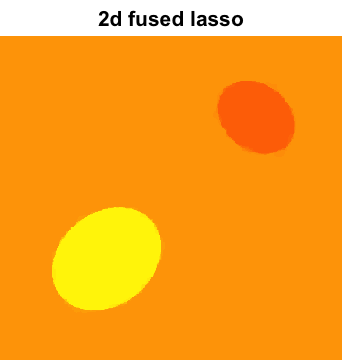} \\ 
& \includegraphics[width=0.28\textwidth]{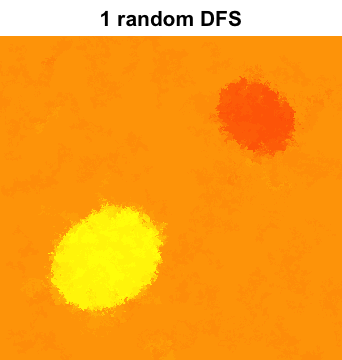} & 
\includegraphics[width=0.28\textwidth]{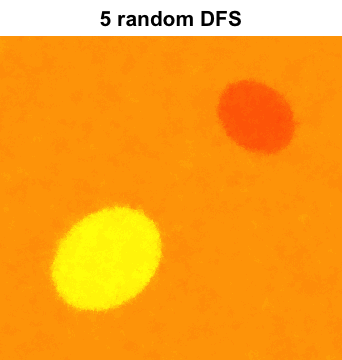} &
\includegraphics[width=0.28\textwidth]{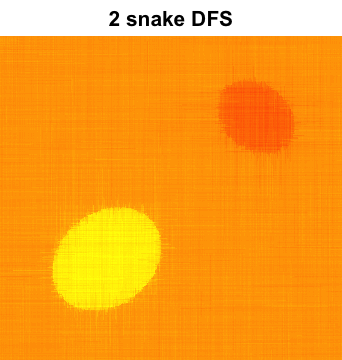} 
\end{tabular}
\caption{\small\it Underlying signal, data, and solutions from the
  2d fused lasso and different variations on the DFS fused lasso fit
  over a $1000 \times 1000$ grid.}
\label{fig:grid_graph_image}

\bigskip
\includegraphics[width=0.48\textwidth]{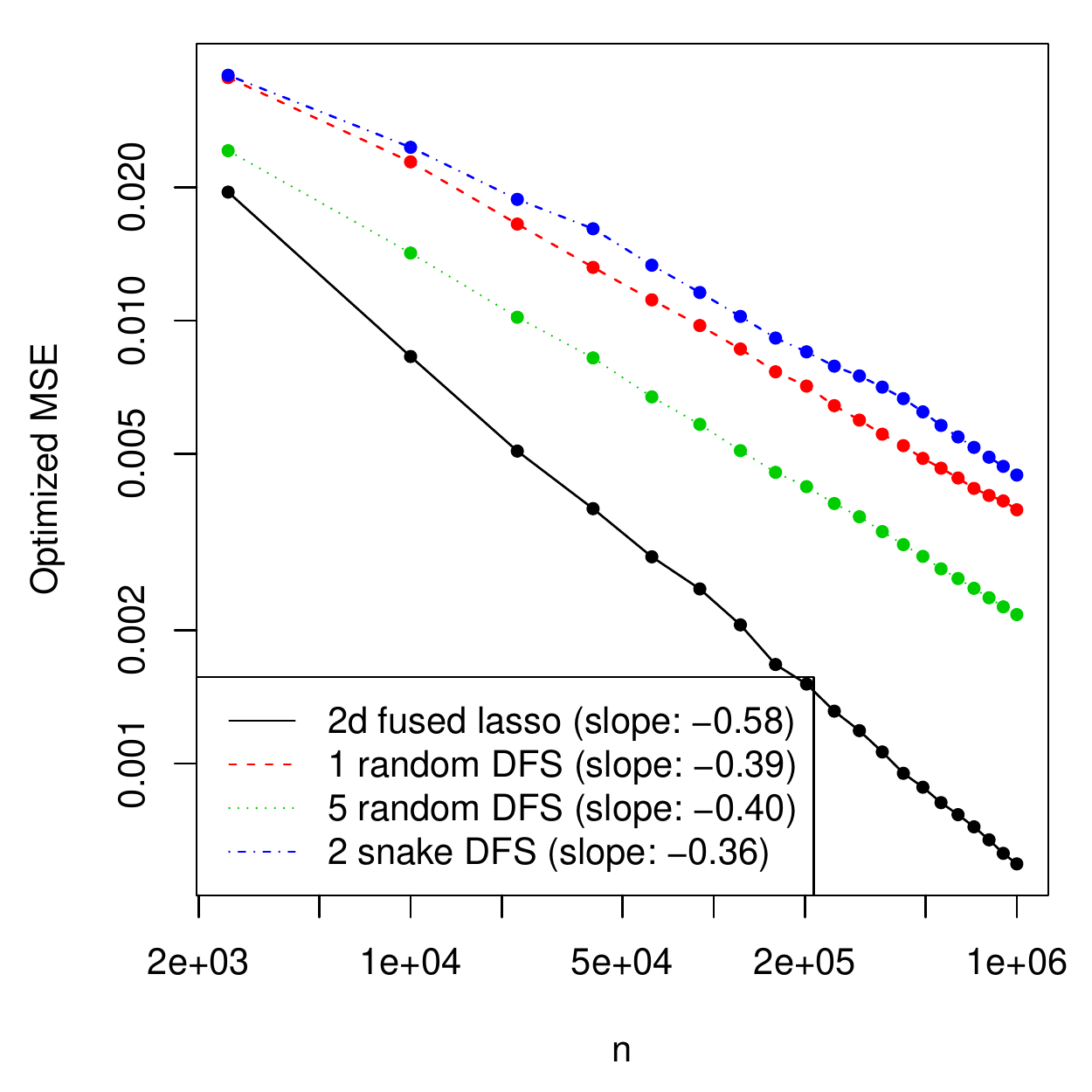}
\includegraphics[width=0.48\textwidth]{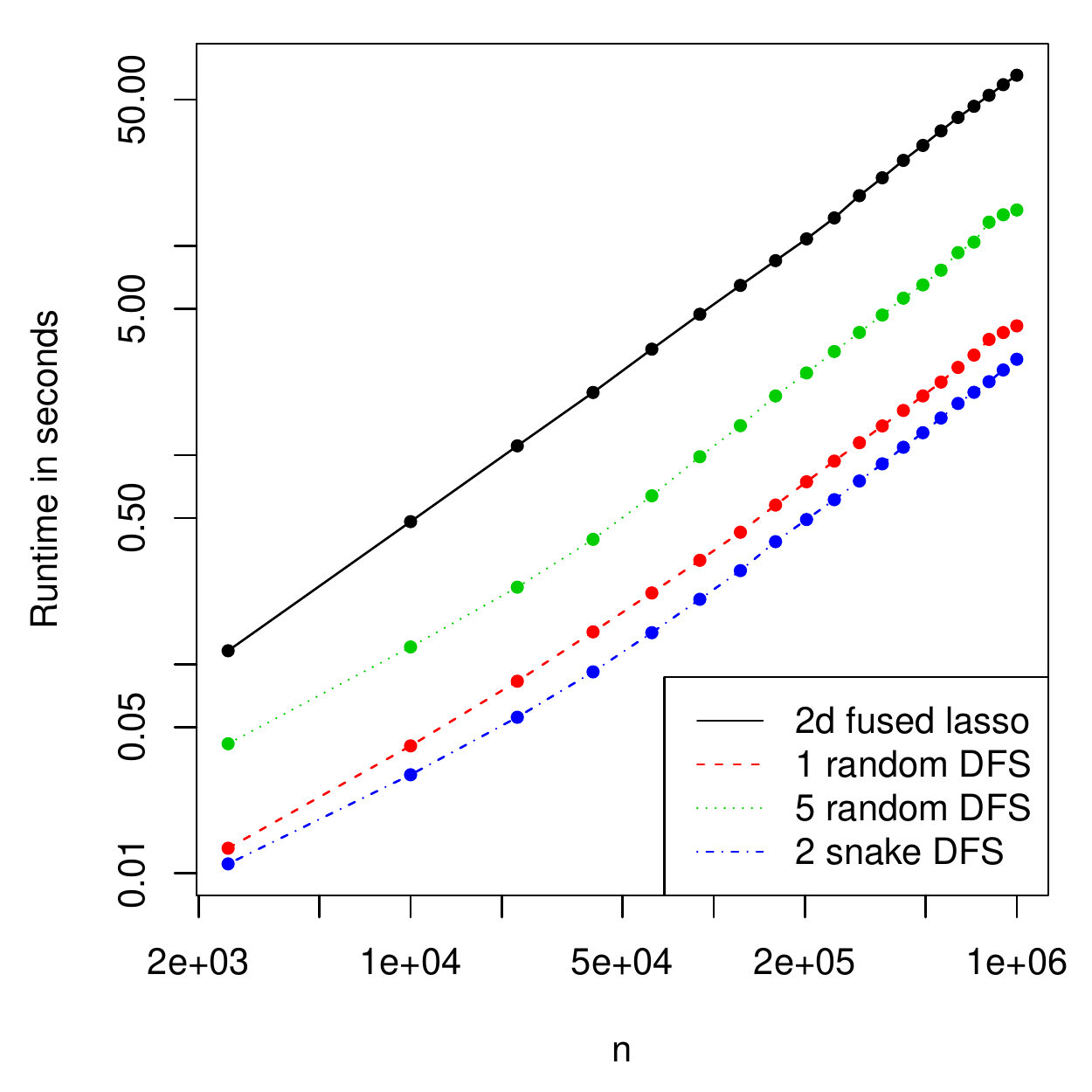}
\caption{\small\it Optimized MSE and runtime for the 2d fused lasso
  and DFS fused lasso estimators over a 2d grid, as the grid
  size $n$ (total number of nodes) varies.}
\label{fig:grid_graph_mse}
\end{figure}

\subsection{Tree graphs}
 
We finish with denoising comparisons on tree graphs, for sample sizes
varying from $n=100$ to $n=5300$.  For each sample size $n$, a random
tree is constructed via a sequential process in which each node is
assigned a number of children between 2 and 10 (uniformly at random).
Given a tree, an underlying signal $\theta_0 \in \R^n$ is constructed
to be piecewise constant with total variation \smash{$5\sqrt{n}$} (the
piecewise constant construction here is made easy because the oriented
incidence matrix of a tree is invertible).  Data $y \in \R^n$ was
generated by adding i.i.d.\ $N(0,1)$ noise to $\theta_0$. We
compared the fused lasso estimator over the full tree, 1 random DFS
and 5 random DFS estimators (using the terminology from the last
subsection), and the wavelet smoothing estimator defined in
\eqref{eq:wavelet}.
For each estimator, we computed the entire solution path using the
path algorithm of \citet{tibshirani2011solution} implemented in  
the R package {\tt genlasso}, and selected the step along the
path to minimize the average MSE over 50 draws of data $y$ around
$\theta_0$, and 10 repetitions in constructing $\theta_0$. (The full
solution path can be computed here because each estimator can be cast
as a generalized lasso problem, and because the problem sizes
considered here are not enormous.)

 The left panel of Figure \ref{fig:tree_graph} 
plots this optimized MSE as a function of the sample size $n$.  
We see that the fused lasso estimator over the full tree and the 5
random DFS estimator perform more or less equivalently over all
sample sizes.  The 1 random DFS estimator is slightly worse, and the
wavelet smoothing estimator is considerably worse. 
The right panel shows the the MSE as a function of the effective 
degrees of freedom of each estimator, for a particular data instance
with $n=5300$.  We see that both the tree fused lasso and 1 random DFS
estimators achieve their optimum MSEs at solutions of low complexity  
(degrees of freedom), whereas wavelet smoothing does not come  
close to achieving this MSE across its entire path of solutions. 

\begin{figure}[htbp]
\centering
\includegraphics[width=0.48\textwidth]{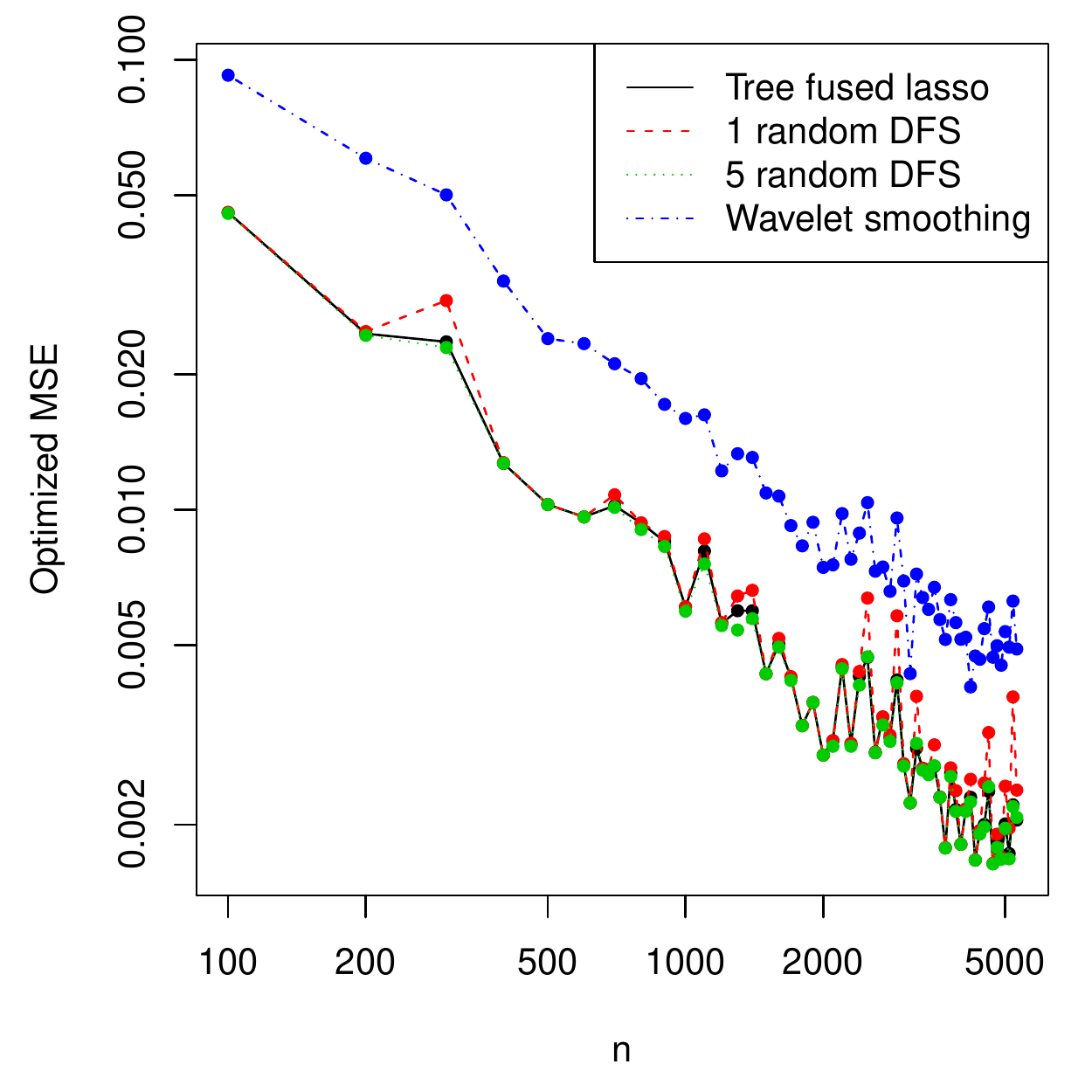}
\includegraphics[width=0.48\textwidth]{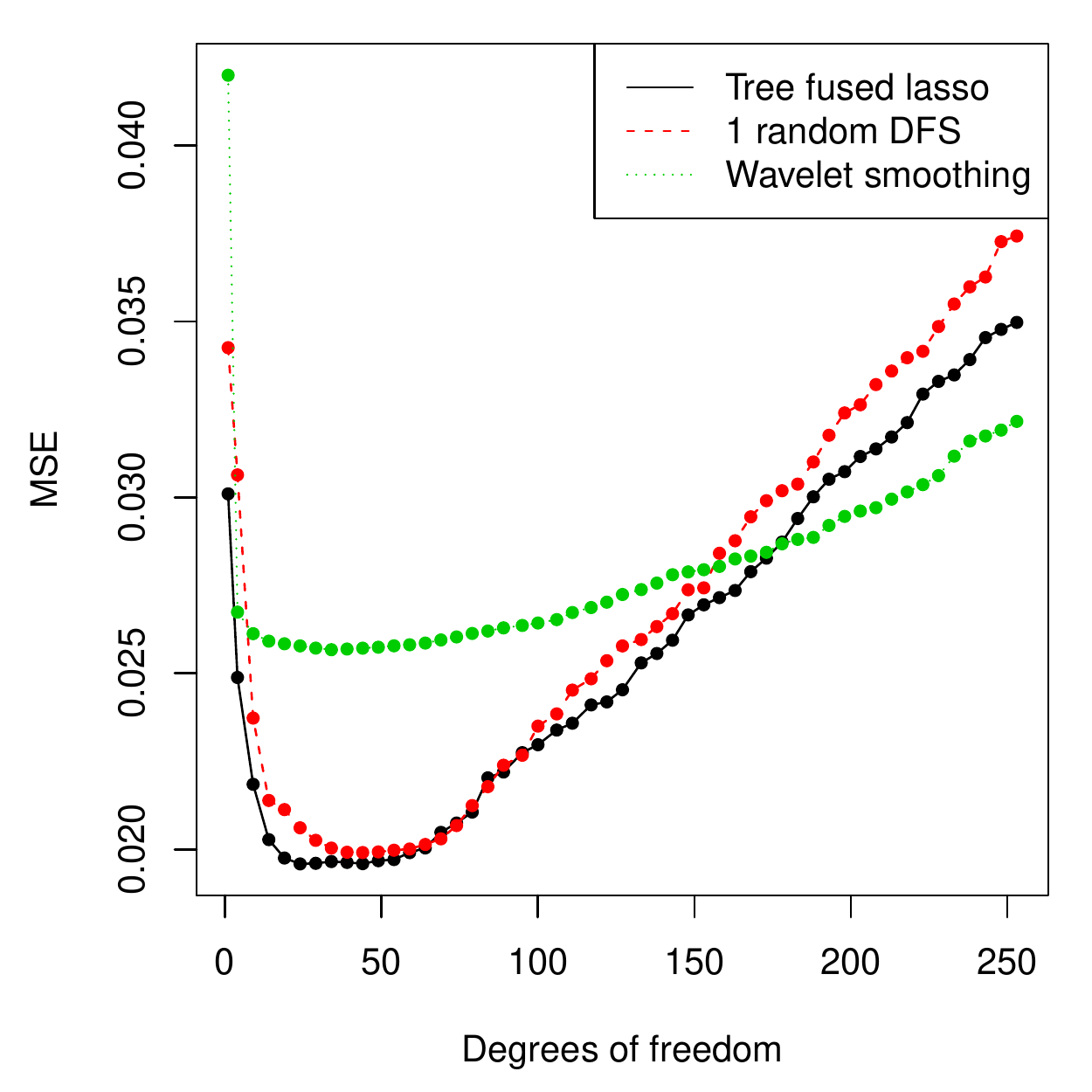}
\caption{\small\it The left panel shows the optimized MSE as a
  function of the sample size for the fused lasso over a tree graph,
  as well as the 1 random DFS and 5 random DFS estimators, and wavelet
  smoothing. The right panel}
\label{fig:tree_graph}
\end{figure}

\section{Discussion}
\label{sec:discussion}

Recently, there has been a significant amount on interest on  
graph-structured denoising. Much of this work has focused on the
construction of graph kernels or wavelet bases.  We have proposed and
studied a simple method, defined by computing the 1d fused lasso over
a particular DFS-induced ordering of the nodes of a general
graph. This linear-time algorithm comes with strong theoretical
guarantees for signals of bounded variation (achieving optimal MSE
rates for trees of bounded degree), as well as guarantees for
signals with a bounded number of nonzero differences (achieving nearly
optimal rates under a condition on the spacings of jumps along the
DFS-induced chain).  We summarize our theoretical results in Table
\ref{tab:summary}.

\begin{table}
\centering
\bgroup
\def\arraystretch{1.75}
\begin{tabular}{|c|c|c|}
\hline
& \smash{$\BV_G(t)$}, $t \asymp 1$
& \smash{$\BD_G(s)$}, $s \asymp 1$ \\    
\hline 
Fused lasso, \smash{$\htheta_G$} 
& \smash{$n^{-2/3}$} 
& unknown \\ 
\hline 
Spanning tree wavelets, \smash{$\htheta_W$} 
& unknown 
& \smash{$(\log^2{n} \log d_{\mathrm{max}})/n$} \\  
\hline
DFS fused lasso, \smash{$\htheta_{\mathrm{DFS}}$} 
& \smash{$n^{-2/3}$}
& \smash{$(\log{n}\log\log{n})/n^*$} \\ 
\hline
Tree lower bound 
& \smash{$n^{-2/3} d_{\mathrm{max}}^{-4/3}$} 
& \smash{$\log{n}/n$} \\ \hline 
\end{tabular}
\egroup
\caption{\small\it A summary of the theoretical results derived in
  this paper.  All rates are on the mean squared error (MSE) scale
  (\smash{$\E\|\htheta - \theta_0 \|_n^2$} for an estimator 
  \smash{$\htheta$}), and for simplicity, are presented under a
  constant scaling for $t,s$, the radii in the \smash{$\BV_G(t),
    \BD_G(s)$} classes, respectively.  The superscript ``$^*$'' in the 
  \smash{$\BD_G(s)$} rate for the DFS fused lasso is used to emphasize 
  that this rate only holds under the assumption that $W_n \asymp
  n$. Also, we write \smash{$d_{\mathrm{max}}$} to denote the max
  degree of the graph in question.}
\label{tab:summary}
\end{table}

Practically, we have seen that the DFS fused lasso can often represent 
a useful tradeoff between computational efficiency and statistical
accuracy, versus competing methods that offer better statistical
denoising power but are more computationally expensive, especially for
large problems.  A simple trick like averaging multiple DFS fused
lasso fits, over multiple random DFS-induced chains, often improves
statistical accuracy at little increased computational cost.  Several
extensions along these lines, and other lines, are possible.  To
study any of them in detail is beyond the scope of this paper.  We
discuss them briefly below, leaving detailed follow-up to future
work.

\subsection{Beyond simple averaging}

Given multiple DFS fused lasso estimators, 
\smash{$\htheta^{(1)}_{\mathrm{DFS}}, \ldots,
  \htheta^{(K)}_{\mathrm{DFS}}$}, obtained using multiple DFS-induced 
chains computed on the same graph $G$, there are several 
possibilities for intelligently combining these estimators beyond the
simple average, denoted (say) \smash{$\bar\theta^{(K)}_{\mathrm{DFS}}
  = (1/K) \sum_{k=1}^K \htheta^{(k)}_{\mathrm{DFS}}$}.  To better
preserve edges in the combined estimator, we could run a simple
nonlinear filter---for example, a median filter, over
\smash{$\htheta^{(1)}_{\mathrm{DFS}}, \ldots,  
  \htheta^{(K)}_{\mathrm{DFS}}$} (meaning that the combined estimator
is defined by taking medians over local neighborhoods of all of the 
individual estimators). A more sophisticated
approach would be to compute the DFS fused lasso estimators    
sequentially, using the $(k-1)$st estimator to modify the response in
some way in the 1d fused lasso problem that defines the $k$th 
DFS fused lasso estimator. We are intentionally vague here with the 
specifics, because such a modification could be implemented in
various ways; for example, it could be useful to borrow ideas from the
boosting literature, 
which would have us treat each DFS fused lasso estimator as a weak 
learner.   


\subsection{Distributed algorithm}
 
For large graphs, we should be able to both compute a DFS 
ordering over $G$, and solve the DFS fused lasso problem in
\eqref{eq:fused_lasso_dfs}, in a distributed fashion.  There are
many algorithms for distributed DFS, offering a variety of 
communication and time complexities; see, e.g.,
\citet{tsin2002remarks} for a survey.  Distributed algorithms for the 
1d fused lasso are not as common, though we can appeal to the now 
well-studied framework for distributed optimization via the
alternating direction method of multipliers (ADMM) from   
\citet{boyd2011distributed}.  Different formulations for
the auxiliary variables present us with different options for
communication costs.  We have found that, for a formulation that
requires $O(1)$-length messages to be communicated between processors, 
the algorithm typically converges in a reasonably small number of
iterations. 

\subsection{Theory for piecewise constant signals}

The bounded differences class \smash{$\BD_G(s)$} in
\eqref{eq:bd_class} is defined in terms of the cut metric
\smash{$\|\nabla_G \theta\|_0$} of a parameter $\theta$, which recall,
counts the number of nonzero differences occurring in $\theta$ over
edges in the graph $G$. The cut metric measures a notion of strong 
sparsity (compared to the weaker notion measured by the total
variation metric) in a signal $\theta$, over edge differences;
but, it may not be measuring sparsity on 
the ``right'' scale for certain graphs $G$. Specifically, the
cut metric \smash{$\|\nabla_G \theta\|_0$} can actually be quite
large for a parameter $\theta$ that is piecewise constant over $G$,
with a small number of pieces---these are groups of connected nodes
that are assigned the same constant value in $\theta$. 
Over the 2d grid graph, e.g., one can easily define a parameter
$\theta$ that has only (say) two constant pieces but on the order of   
\smash{$\sqrt{n}$} nonzero edge differences. Therefore, for such a 
``simple'' configuration of the parameter $\theta$, the cut metric 
\smash{$\|\nabla_G \theta\|_0$} is deceivingly large. 

To formally define a metric that measures the number of constant
pieces in a parameter $\theta$, with respect to a graph $G=(V,E)$, we  
introduce a bit of notation.  Denote by $Z(\theta) \subseteq E$ the
subset of edges over which $\theta$ exhibits differences
of zero, i.e., \smash{$Z(\theta) = \{ e \in E : \theta_{e^+} =
  \theta_{e^-} \}$}.  Also write
\smash{$(\nabla_G)_{Z(\theta)}$} for the submatrix of the edge
incidence matrix $\nabla_G$ with rows indexed by $Z(\theta)$. 
We define the {\it piece metric} by 
\begin{equation*}
\rho_G(\theta) = \nuli\big((\nabla_G)_{Z(\theta)}\big), 
\end{equation*}
where $\nuli(\cdot)$ denotes the dimension of the null space of its
argument.  An equivalent definition is
\begin{equation*}
\rho_G(\theta) = \text{the number of connected components in $(V,E
  \setminus Z(\theta))$}.
\end{equation*}
We may now define the {\it piecewise constant class}, with respect to
$G$, and a parameter $s>0$, 
\begin{equation*}
\PC_G(s) = \{\theta \in \R^n : \rho_G(\theta) \leq s\}.
\end{equation*}
It is not hard to to see that \smash{$\BV_G(s) \subseteq
  \PC_G(s)$} 
(assuming only that $G$ is connected), but for certain graph
topologies, the latter class \smash{$\PC_G(s)$} will be much larger.  
Indeed, to repeat what we have conveyed above, for the 2d grid one
can naturally define a parameter $\theta$ such that
\smash{$\theta \in \BD_G(\sqrt{n})$} and \smash{$\theta
  \in \PC_G(2)$}.

We conjecture that the fused lasso estimator over $G$ can achieve a 
fast MSE rate when the mean $\theta_0$ in \eqref{eq:model}
exhibits a small number of constant pieces, i.e., \smash{$\theta_0 \in
  \PC_G(s)$}, 
provided that these pieces are of roughly equal size.
Specifically, assuming \smash{$\rho_G(\theta_0) \leq s$}, let
$W_n$ denote the smallest size of a connected component in the graph  
\smash{$(V, E \setminus Z(\theta_0))$}.  Then, for a suitable choice
of $\lambda$, we conjecture that the fused lasso estimator
\smash{$\htheta_G$} in \eqref{eq:fused_lasso} satisfies
\begin{equation*}
\tag{{\bf conjecture}}
\|\htheta_G - \theta_0\|_n^2 = O_\P\bigg(\frac{s}{n} 
\Big(\mathrm{polylog}\,{n} + n/W_n \Big)\bigg),
\end{equation*}
where $\mathrm{polylog}\,{n}$ is a shorthand for a polynomial of  
$\log{n}$. This would substantially improve upon existing  
strong sparsity denoising results, such as \eqref{eq:grid_bound}, 
\eqref{eq:bd_bound_dfs}, \eqref{eq:bd_bound_wav}, since the latter
results are all proven for the class \smash{$\BD_G(s)$}, which, as we
have argued, can be much smaller than \smash{$\PC_G(s)$}, depending on
the structure of $G$. 

\subsection{Weighted graphs}
\label{sec:weighted}

The key result in Lemma \ref{lem:dfs} can be extended to the setting 
of a weighted graph $G=(V,E,w)$, with $w_e \geq 0$ denoting the edge 
weight associated an edge $e \in E$.  We state the following without
proof, since its proof follows in nearly the exact same way as that of
Lemma \ref{lem:dfs}.

\begin{lemma}
Let $G=(V,E,w)$ be a connected weighted graph, where recall we write
$V=\{1,\ldots,n\}$, and we assume all edge weights are nonnegative.
Consider running DFS on $G$, and denote by $\tau : \{1,\ldots,n\} \to 
\{1,\ldots,n\}$ the induced permutation, so that if $v_1,\ldots,v_n$
are the nodes in the order that they are traversed by DFS, then 
\begin{equation*}
\tau(i) = v_i, \quad \text{for all} \;\, i=1,\ldots,n.
\end{equation*}
Denote $w_{\mathrm{min}} = \min_{e \in E} w_e$, the minimum edge
weight present in the graph, and define
\begin{equation}
\label{eq:w_til}
\tilde{w}_{ij} = \begin{cases}
w_e & \text{if $e=\{i,j\} \in E$}, \\
w_{\mathrm{min}} & \text{otherwise},
\end{cases}
\quad \text{for all} \;\, i,j =1,\ldots,n.
\end{equation}
It holds that
\begin{equation}
\label{eq:dfs_weighted_l1}
\sum_{i=1}^{n-1} \tilde{w}_{\tau(i),\tau(i+1)}
\big| \theta_{\tau(i+1)} - \theta_{\tau(i)} \big| \leq 
2 \sum_{e \in E} w_e | \theta_{e^+} - \theta_{e^-} |,
\quad \text{for all} \;\, \theta \in \R^n,
\end{equation}
as well as 
\begin{equation}
\label{eq:dfs_weighted_l0}
\sum_{i=1}^{n-1} \tilde{w}_{\tau(i),\tau(i+1)}
1\big\{ \theta_{\tau(i+1)} \not= \theta_{\tau(i)} \big\} \leq  
2 \sum_{e \in E} w_e 1\{ \theta_{e^+} \not= \theta_{e^-} \}, 
\quad \text{for all} \;\, \theta \in \R^n.
\end{equation}
\end{lemma}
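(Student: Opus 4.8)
The plan is to mimic the proof of Lemma~\ref{lem:dfs} almost verbatim, carrying the edge weights along. First I would expand the left-hand side of \eqref{eq:dfs_weighted_l1} as
\[
\sum_{i=1}^{n-1} \tilde{w}_{\tau(i),\tau(i+1)}\,\big| \theta_{\tau(i+1)} - \theta_{\tau(i)} \big|,
\]
and bound each summand separately, using the same two-case split as in Lemma~\ref{lem:dfs}. Write $T=(V,E_T)$ for the DFS spanning tree produced along the way. In the first case, $\tau(i)$ is not finished and $\tau(i+1)$ is a freshly discovered neighbor, so $e=\{\tau(i),\tau(i+1)\}\in E$ (in fact $e\in E_T$), and by \eqref{eq:w_til} the summand equals exactly $w_e\,|\theta_{e^+}-\theta_{e^-}|$. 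In the second case, DFS backtracks, so there is a path $p_1=\tau(i),\ldots,p_r=\tau(i+1)$ along tree edges $e_j=\{p_j,p_{j+1}\}$; here $\tilde{w}_{\tau(i),\tau(i+1)}=w_{\mathrm{min}}\le w_{e_j}$ for every $j$, so combining this bound with the triangle inequality controls the summand by $\sum_{j=1}^{r-1} w_{e_j}\,|\theta_{p_j}-\theta_{p_{j+1}}|$. This is exactly where the definition \eqref{eq:w_til} is used: $w_{\mathrm{min}}$ is the largest constant that can always be absorbed into the genuine edge weights encountered along a backtracking path.

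Next I would sum these bounds over $i=1,\ldots,n-1$. Concatenating the walks $p_1\to\cdots\to p_r$ over all $i$ reconstructs the DFS traversal of $T$, which crosses each edge of $T$ at most twice (once ``down'' at discovery, and at most once ``up'' at backtracking; cf.\ Chapter~22 of \citet{cormen2001introduction}). Hence each edge $e\in E_T$ contributes the term $w_e\,|\theta_{e^+}-\theta_{e^-}|$ at most twice to the accumulated bound, so the right-hand side is at most $2\sum_{e\in E_T} w_e\,|\theta_{e^+}-\theta_{e^-}|$. Since $E_T\subseteq E$ and all weights are nonnegative, this is in turn at most $2\sum_{e\in E} w_e\,|\theta_{e^+}-\theta_{e^-}|$, which gives \eqref{eq:dfs_weighted_l1}.

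Finally, \eqref{eq:dfs_weighted_l0} follows from the identical argument after replacing $|\theta_{e^+}-\theta_{e^-}|$ by $1\{\theta_{e^+}\neq\theta_{e^-}\}$ throughout and the triangle inequality by the subadditivity relation $1\{a\neq c\}\le 1\{a\neq b\}+1\{b\neq c\}$; the weight bookkeeping is unchanged. I do not anticipate a real obstacle here; the only step deserving a moment's care is confirming that the fictitious weights $w_{\mathrm{min}}$ attached to non-edges are never ``charged'' except along paths consisting of true edges, each of weight at least $w_{\mathrm{min}}$ --- which is precisely what the case analysis guarantees, and why $w_{\mathrm{min}}$ (rather than, say, $0$) is the natural choice in \eqref{eq:w_til}.
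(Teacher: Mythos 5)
Your proof is correct and is exactly the argument the paper has in mind: the paper states this lemma without proof precisely because it "follows in nearly the exact same way as that of Lemma \ref{lem:dfs}," and your weighted bookkeeping (exact weight $w_e$ on discovery steps, $w_{\mathrm{min}}\le w_{e_j}$ absorbed into the true tree-edge weights on backtracking steps, each tree edge charged at most twice by the Euler-tour property) is the right way to carry the weights through. The only point worth a sentence in a written version is the observation that in the backtracking case $\{\tau(i),\tau(i+1)\}\notin E$ (since all neighbors of $\tau(i)$ were already discovered), which is what guarantees $\tilde{w}_{\tau(i),\tau(i+1)}=w_{\mathrm{min}}$ there.
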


The bounds in \eqref{eq:dfs_weighted_l1}, \eqref{eq:dfs_weighted_l0}
are the analogies of \eqref{eq:dfs_l1}, \eqref{eq:dfs_l0} but for
a weighted graph $G$; indeed we see that we can still embed a DFS
chain into $G$, but this chain itself comes with edge weights, as in 
\eqref{eq:w_til}.  These new edge weights in the chain do not
cause any computational issues; the 1d fused lasso problem 
with arbitrary penalty weights can still be solved in $O(n)$ time
using the taut string algorithm in \citet{barbero2014modular}.  Thus,
in principle, all of the results in this paper should carry over in
some form to weighted graphs.

\subsection{Potts and energy minimization}

Replacing the total variation metric by the cut metric in the fused
lasso problem \eqref{eq:fused_lasso} gives us
\begin{equation}
\label{eq:potts}
\ttheta_G = \argmin_{\theta \in \R^n} \; 
\half \|y - \theta\|_2^2 + \lambda \| \nabla_G \theta\|_0,
 \end{equation}
often called the {\it Potts minimization} problem. 
Because the 1d Potts minimization problem
\begin{equation}
\label{eq:potts_1d}
\ttheta_{\mathrm{1d}} = \argmin_{\theta \in \R^n} \; 
\half \|y - \theta\|_2^2 + \lambda \| \nabla_{\mathrm{1d}} \theta\|_0  
 \end{equation}
can be solved efficiently, e.g., in worst-case $O(n^2)$ time with 
dynamic programming 
\citet{bellman1961approximation,johnson2013dynamic}, 
the same strategy that we have proposed in this paper can be applied
to reduce the graph Potts problem \eqref{eq:potts} to a 1d Potts
problem \eqref{eq:potts_1d}, via a DFS ordering of the nodes.  This
may be especially interesting as the original Potts problem 
\eqref{eq:potts} is non--convex and generally intractable (i.e.,
intractable to solve to global optimality) for an arbitrary graph
structure, so a reduction to a worst-case quadratic-time
denoiser is perhaps very valuable.

When the optimization domain in \eqref{eq:potts} is a discrete set,
the problem is often called an {\it energy minimization}
problem, as in \citet{boykov2001fast}.  It has not escaped our notice
that our technique of denoising over DFS-induced chains could be
useful for this setting, as well.

\appendix 

\section{Proofs}

\subsection{Derivation of \eqref{eq:gen_bound} from Theorem 3 in
  \citet{wang2016trend}}
\label{app:gen_bound}

We first establish a result on the exact form for the inverse of (an 
augmented version of) the edge incidence matrix of a generic tree
$T=(V,E_T)$, where, recall $V=\{1,\ldots,n\}$.  Without
a loss of generality, we may assume that the root of $T$ is at node
1. For $m\leq n$, we define a path in $T$, of length $m$, to be a 
sequence $p_1,\ldots, p_m$ such that $\{p_r,p_{r+1}\} \in E_T$ for
each $r = 1,\ldots, m-1$. We allow for the possibility that $m = 1$,
in which case the path has just one node. For any $j,k,\ell =
1,\ldots,n$, we say that $j$ is on the path from $k$ to $\ell$ if 
there exists a path $p_1,\ldots, p_m$  such that $p_1 = k$, $p_m = \ell$
and $p_r =  j$ for some $r = 1,\ldots,m$.  For each
node $i = 2,\ldots,n$ (each node other than the root), we 
define its parent $p(i)$ to be the node connected to $i$ which is on
the path from the root to $i$.   

We can also assume without a loss of generality that for each $i =
2,\ldots,n$, the $(i-1)$st row of $\nabla_T$ corresponds to the edge
$\{p(i), i\}$, and thus we can write  
\begin{equation*}
(\nabla_T)_{i-1,j} = \begin{cases}
-1 & \text{if $j=p(i)$}, \\
1 & \text{if $j=i$}, \\
0 & \text{if $j \in \{1,\ldots,n\} \setminus \{i, p(i)\}$}. 
\end{cases}
\end{equation*}
for each $j=1,\ldots,n$.
The next lemma describes the inverse of $\nabla_T$, in the appropriate
sense. 

\begin{lemma}
\label{lem:tree_edge_incidence_inverse}
Let $e_1 = (1,0,\ldots,0) \in \R^n$, and define the matrix $A_T \in 
\R^{n\times n}$ by
\begin{equation}
\label{eq:tree_edge_incidence_inverse}
(A_T)_{i,j} = \begin{cases}
1 & \text{if $j$ is on the path from the root to $i$}, \\
0 & \text{otherwise},
\end{cases}
\end{equation}
for each $i,j =1,\ldots,n$.
Then
\begin{equation*}
A_T  = \left( \begin{array}{c}
    e_1^\top  \\
    \nabla_T
  \end{array} \right)^{-1}.
\end{equation*}
\end{lemma}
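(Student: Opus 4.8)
The plan is to verify directly that $A_T$ is the inverse of the augmented incidence matrix in the statement, call it $B$, by computing the product $BA_T$ entrywise and checking that it telescopes to the identity. Since $B$ is square, it suffices to establish $BA_T = I_n$.

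First I would record the row structure of $B$. Its first row is $e_1^\top$. For $i = 2,\ldots,n$, its $i$th row is the $(i-1)$st row of $\nabla_T$, which by the convention fixed just above the lemma has a $+1$ in column $i$, a $-1$ in column $p(i)$, and zeros elsewhere; that is, the $i$th row of $B$ equals $e_i^\top - e_{p(i)}^\top$.

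Next I would compute $BA_T$ row by row. For the first row, $(BA_T)_{1,k} = (A_T)_{1,k}$, and by \eqref{eq:tree_edge_incidence_inverse} this is $1$ exactly when $k$ lies on the path from the root to node $1$; since that path consists of node $1$ alone, the first row of $BA_T$ is $e_1^\top$. For $i \geq 2$, linearity gives $(BA_T)_{i,k} = (A_T)_{i,k} - (A_T)_{p(i),k} = \one\{k \text{ on the root-to-}i \text{ path}\} - \one\{k \text{ on the root-to-}p(i) \text{ path}\}$. The structural fact I would invoke is that the root-to-$i$ path is obtained from the root-to-$p(i)$ path by appending the single node $i$; hence the set of nodes on the former equals the set of nodes on the latter together with $\{i\}$, and $i$ does not already belong to the latter. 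This forces $(A_T)_{i,k} - (A_T)_{p(i),k} = \one\{k = i\}$, so the $i$th row of $BA_T$ is $e_i^\top$. Combining the two cases yields $BA_T = I_n$, and therefore $A_T = B^{-1}$.

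I expect the only point needing care, and it is a small one, to be the structural claim that $i$ is not on the path from the root to its parent $p(i)$; this uses that $T$ is acyclic and that $p(i)$ was defined as the neighbor of $i$ lying on the root-to-$i$ path, so that appending $i$ to the root-to-$p(i)$ path indeed produces a simple path with no repeated vertices. Everything else is a routine telescoping computation. An equivalent route, if preferred, is to check $A_T B = I_n$ instead: there $A_T$ acts as the operator that, given edge increments $\nabla_T \theta$ together with the value $\theta_1$ pinned by $e_1^\top \theta$, reconstructs $\theta$ by summing increments along the unique root-to-node path, a discrete fundamental-theorem-of-calculus identity that is the same computation read in the other direction.
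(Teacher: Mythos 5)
Your proposal is correct and follows essentially the same route as the paper: both verify that the augmented matrix times $A_T$ is the identity, using the structural fact that the root-to-$i$ path is the root-to-$p(i)$ path with $i$ appended (the paper just splits your single set identity into three explicit cases on $j$). No gaps.
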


\begin{proof}
We will prove that the product 
\[
B = \left( \begin{array}{c}
    e_1^\top  \\
    \nabla_T
  \end{array} \right) A_T
\]
is the identity.
As the root of $T$ corresponds to node 1, we have that by
definition of $A_T$ that its first column is
\[
(A_T)_{\cdot,1}\, = (1,\ldots,1),
\]
which implies that the first column of $B$ is
\[
B_{\cdot,1}= e_1. 
\]
Moreover, by definition of $A_T$, its first row is
\[
(A_T)_{1,\cdot} = e_1^\top,
\]
which implies that the first row of $B$ is 
\[
B_{1,\cdot} = e_1^\top.   
\]
Let us now assume that $i,j$ are each not the root. We proceed to 
consider three cases.

\smallskip\smallskip
\noindent
{\bf Case 1.} Let $j \neq i$, and $j$ be on the path from the root to
$i$. Then $j$ is also on the path from the root to $p(i)$. This
implies that 
\[
B_{ij} = \left( \begin{array}{c}
    e_1^\top  \\
    \nabla_T
  \end{array} \right)_{i,\cdot}
 (A_T)_{\cdot,j} = (\nabla_T)_{i-1,\cdot} (A_T)_{\cdot,j} = 1 - 1 = 0. 
\]

\smallskip\smallskip
\noindent
{\bf Case 2.} Let $j \neq i$, and $j$ not be on the path from the root
to $i$. Then $j$ is not on the path from the root to $p(i)$, which
implies that
\[
B_{ij} = \left( \begin{array}{c}
    e_1^\top  \\
    \nabla_T
  \end{array} \right)_{i,\cdot}
 (A_T)_{\cdot,j} = (\nabla_T)_{i-1,\cdot} (A_T)_{\cdot,j} = 0 - 0 = 0. 
\]

\smallskip\smallskip
\noindent
{\bf Case 3.} Let $j = i$. Then $j$ is on the path from the root to
$i$, and $j$ is not on the path from the root to $p(i)$. Hence, 
\[
B_{ij} = \left( \begin{array}{c}
    e_1^\top  \\
    \nabla_T
  \end{array} \right)_{i,\cdot}
 (A_T)_{\cdot,j} = (\nabla_T)_{i-1,\cdot} (A_T)_{\cdot,j} = -1 \cdot 0 
 + 1 \cdot 1 = 1. 
\]

\smallskip\smallskip
Assembling these three cases, we have shown that $B=I$, completing the
proof. 
\end{proof}

We now establish \eqref{eq:gen_bound}.

\begin{proof}[Proof of \eqref{eq:gen_bound}]
The proof of Theorem 3 in \citet{wang2016trend} proceeds as in
standard basic inequality arguments for the lasso, and arrives at the
step
\begin{equation*}
\|\Pi^\perp (\htheta_G - \theta_0)\|_2^2 \leq 2 \epsilon^\top
\Pi^\perp (\htheta_G - \theta_0) +
2\lambda \|\nabla_G \theta_0\|_1 - 2\lambda \|\nabla_G \htheta_G\|_1,
\end{equation*}
where $\Pi^\perp$ is the projection matrix onto the space
\smash{$\spa\{\one\}^\perp$}, i.e., the linear space of all vectors
orthogonal to the vector $\one=(1,\ldots,1) \in \R^n$ of all 1s. The
proof in \citet{wang2016trend} uses the identity
\smash{$\Pi^\perp=\nabla_G^\dagger \nabla_G$}, where 
\smash{$\nabla_G^\dagger$} denotes the pseudoinverse of $\nabla_G$.
However, notice that we may also write
\smash{$\Pi^\perp=\nabla_T^\dagger \nabla_T$} for any spanning tree 
$T$ of $G$. Then, exactly the same arguments as in
\citet{wang2016trend} produce the MSE bound
\begin{equation*}
\|\htheta_G - \theta_0\|_n^2 = O_\P\bigg( 
\frac{M(\nabla_T) \sqrt{\log{n}}}{n} 
\|\nabla_G \theta_0\|_1 \bigg),
\end{equation*}
where \smash{$M(\nabla_T)$} is the maximum $\ell_2$ norm among the
columns of \smash{$\nabla_T^\dagger$}.  We show below, using  
Lemma \ref{lem:tree_edge_incidence_inverse}, that
\smash{$M(\nabla_T) \leq \sqrt{n}$}, and this gives the desired MSE
rate.   

For any $b \in \R^{n-1}$, we may characterize \smash{$\nabla_T^\dagger
  b$} as the unique solution $x \in \R^n$ to the linear system  
\[
\nabla_T x = b,
\]
such that $\one^\top x = 0$, i.e., the unique solution to the linear
system 
\[
\left( \begin{array}{c}
    e_1^\top  \\
    \nabla_T
  \end{array} \right) x = 
\left( \begin{array}{c}
    a  \\
    b
  \end{array} \right),
\]
for a value of $a \in \R$ such that $\one^\top x = 0$. By Lemma 
\ref{lem:tree_edge_incidence_inverse}, we may write
\[
x = A_T \left( \begin{array}{c}
    a  \\
    b
  \end{array} \right),
\]
so that the constraint \smash{$0= \one^\top x = na + \one^\top
  (A_T)_{\cdot, 2:n} b$} gives \smash{$a=-(\one/n)^\top 
  (A_T)_{\cdot,2:n}  b$}, and 
\[
x = (I-\one\one^\top/n) (A_T)_{\cdot, 2:n} b.
\]
Evaluating this across $b=e_1,\ldots,e_n$, we find that the 
maximum $\ell_2$ norm of columns of \smash{$\nabla_T^\dagger$} is
bounded by the maximum $\ell_2$ norm of columns of
\smash{$(A_T)_{\cdot,2:n}$}, which, from the definition in 
\eqref{eq:tree_edge_incidence_inverse}, is at most $\sqrt{n}$. 
\end{proof}

\subsection{Proof of Theorem \ref{thm:bv_minimax}}
\label{app:bv_minimax}

We first present two preliminary lemmas.

\begin{lemma}
\label{lem:part_lower}
Let \smash{$S_1, \ldots, S_m$} be a partition of the nodes of $G$ such
that the total number of edges with ends in distinct elements of the 
partition is at most $s$. Let \smash{$k \leq \min_{i=1,\ldots,m} \,
  |S_i|$}.  Then    
\[
\inf_{\htheta} \; \sup_{\theta_0 \in \BV_G(t)}  \; 
\E \|\htheta - \theta_0 \|_2^2 \geq \frac{k m t^2}{4\sigma^2 s^2}  
\exp\bigg( - \frac{k t^2}{\sigma^2 s^2} \bigg).   
\]
\end{lemma}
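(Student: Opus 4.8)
The plan is to apply Assouad's Lemma to a finite sub-family of signals indexed by the hypercube $\{0,1\}^m$, where the $i$th coordinate controls whether partition element $S_i$ is shifted by a common scalar $\delta$. Concretely, I would fix a per-element amplitude $\delta > 0$ to be chosen at the end, and for each $\omega \in \{0,1\}^m$ define $\theta_\omega \in \R^n$ by $(\theta_\omega)_v = \delta\, \omega_i$ for every $v \in S_i$. Each such signal is constant on every partition element, so the only edges contributing to $\|\nabla_G \theta_\omega\|_1$ are those crossing between distinct elements; there are at most $s$ of them, and each contributes at most $\delta$, so $\|\nabla_G \theta_\omega\|_1 \le s\delta$. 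Choosing $\delta = t/s$ therefore guarantees $\theta_\omega \in \BV_G(t)$ for every $\omega$.

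Next I would set up the two ingredients Assouad requires. For the \emph{separation} in parameter space: if $\omega, \omega'$ differ only in coordinate $i$, then $\theta_\omega$ and $\theta_{\omega'}$ differ by $\delta$ on all of $S_i$ (and agree elsewhere), so $\|\theta_\omega - \theta_{\omega'}\|_2^2 = \delta^2 |S_i| \ge \delta^2 k$ using $k \le \min_i |S_i|$. This yields the factor $km$ (the $m$ coming from summing the per-coordinate contributions over the $m$ bits of the hypercube) and the $\delta^2 = t^2/s^2$ factor. For the \emph{testing affinity}: flipping coordinate $i$ perturbs the mean of the Gaussian data by a vector supported on $S_i$ with squared norm $\delta^2 |S_i|$. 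I would bound the per-coordinate perturbation by using $|S_i|$ directly, or more cautiously note that the relevant quantity controlling the Hellinger/KL affinity between $P_\omega$ and $P_{\omega'}$ is $\|\theta_\omega - \theta_{\omega'}\|_2^2 / \sigma^2 = \delta^2 |S_i|/\sigma^2$; the exponential $\exp(-kt^2/(\sigma^2 s^2))$ in the statement corresponds to taking the crude bound $|S_i| \le$ (something controlled by $k$) or, more likely, using the lower-bounding direction where one keeps only the smallest element size $k$ in the exponent after a standard inequality $1 - \mathrm{TV} \ge \tfrac{1}{2}\exp(-\mathrm{KL})$. Assembling Assouad's bound $\inf_{\htheta} \sup_\omega \E\|\htheta - \theta_\omega\|_2^2 \gtrsim m \cdot \delta^2 k \cdot (\text{affinity})$ then gives exactly $\frac{kmt^2}{4\sigma^2 s^2}\exp(-kt^2/(\sigma^2 s^2))$ after tracking constants.

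The main obstacle I anticipate is getting the constants and the exact form of the exponent right: Assouad's Lemma comes in several normalizations, and the bound here keeps $k$ (not $\max_i |S_i|$) inside the exponential, which means one must be careful that the testing-error term is lower bounded using a per-coordinate KL divergence that is itself upper bounded by $\delta^2 \cdot (\text{an element size})/\sigma^2$ in a way compatible with $\le k$ — this presumably requires either an additional assumption I'm not seeing in this isolated lemma, or the cruder step of bounding $\mathrm{TV}(P_\omega, P_{\omega'})$ via the chi-square/KL between single-element shifts and then invoking $k \le |S_i|$ in the \emph{correct} direction (which needs the affinity written as a product and each factor lower bounded). I would resolve this by writing the mixture-over-hypercube affinity explicitly, factoring over coordinates, and lower bounding each factor by $1 - \sqrt{\mathrm{KL}/2}$ or $\tfrac12 e^{-\mathrm{KL}}$; the remaining arithmetic (choosing $\delta = t/s$, substituting $|S_i| \ge k$, simplifying) is routine. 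The downstream use in Theorem \ref{thm:bv_minimax} will then come from choosing the partition — balancing $m$, $k$, and $s$ as functions of $n$, $t$, $d_{\mathrm{max}}$ — and optimizing $k$ in the resulting bound, but that optimization belongs to the proof of the theorem, not this lemma.
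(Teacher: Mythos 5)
Your overall strategy --- Assouad's Lemma applied to a hypercube of signals that are constant on each partition element, with the crossing-edge count $s$ controlling the total variation budget --- is the same as the paper's. But your specific construction does not deliver the stated bound, and the obstacle you flag at the end is real, not an artifact of a missing assumption. With $(\theta_\omega)_v = \delta\,\omega_i$ on $S_i$ and $\delta = t/s$, the per-coordinate KL divergence between adjacent hypotheses is $\delta^2 |S_i|/\sigma^2$. To lower bound the affinity you need an \emph{upper} bound on this KL, but the only control you have is $|S_i| \ge k$, which points the wrong way: your exponent would come out as $\exp\bigl(-t^2 \max_i |S_i|/(\sigma^2 s^2)\bigr)$, which can be far smaller than the claimed $\exp\bigl(-k t^2/(\sigma^2 s^2)\bigr)$ whenever the partition is unbalanced. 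No choice of a single common amplitude $\delta$ fixes this.

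The paper's resolution is a different normalization of the bumps: it sets $\theta_\eta = \tfrac{\delta}{2}\sum_i \eta_i\, 1_{S_i}/\sqrt{|S_i|}$ with $\eta \in \{-1,1\}^m$. Dividing by $\sqrt{|S_i|}$ makes the squared $\ell_2$ distance between hypotheses differing in coordinate $i$ exactly $\delta^2$, independent of $|S_i|$, so the per-coordinate KL is uniformly $\le \delta^2/\sigma^2$ and the affinity bound is clean. The dependence on block size is pushed entirely into the total variation constraint, where it now helps: each crossing edge contributes at most $\delta/\sqrt{k}$ (using $|S_i| \ge k$ in the favorable direction), so $\|\nabla_G \theta_\eta\|_1 \le \delta s/\sqrt{k}$ and one takes $\delta = t\sqrt{k}/s$. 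Substituting into Assouad's bound $\tfrac{\delta^2 m}{4\sigma^2}\exp(-\delta^2/\sigma^2)$ then produces both the $kmt^2/(4\sigma^2 s^2)$ prefactor and the $\exp(-kt^2/(\sigma^2 s^2))$ exponent with $k$ appearing correctly in each. So the missing idea is precisely this $1/\sqrt{|S_i|}$ rescaling; without it your argument proves a strictly weaker inequality.
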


\begin{proof}
For each $\eta \in \{-1,1\}^m$, define 
\[
\theta_\eta =  
\frac{\delta}{2}\sum_{i=1}^m \eta_i \frac{1_{S_i}}{\sqrt{|S_i|}},  
\] 
where $\delta>0$ will be specified shortly. Also define the   
class \smash{$\cP =\{ N(\theta_\eta,\sigma^2 I): \eta \in
  \{-1,1\}^m\}$}.   
Note that \smash{$\|\nabla_G \theta_\eta\|_1 \leq \delta  
   s/\sqrt{k}$}, so 
 to embed $\cP$ into the class 
 \smash{$\{ N(\theta,\sigma^2 I) : \theta \in \BV_G(t)\}$}, we set   
 \smash{$\delta = t \sqrt{k}/s$}.

Let $\eta,\eta^{\prime} \in \{-1,1\}^m$ differ in only one
coordinate. Then the KL divergence between the corresponding induced
measures in $\cP$ is \smash{$\|\theta_\eta -
  \theta_{\eta^{\prime}}\|_2^2/\sigma^2 \leq \delta^2/\sigma^2$}.  
Hence by Assouad's Lemma \citep{yu1997assouad}, and a well-known lower
bound on the affinity between probability measures in terms of KL
divergence,   
\[
\inf_{\htheta} \; \sup_{\theta_0 \in \BV_G(t)}  \; 
\E \|\htheta - \theta_0 \|_2^2 \geq \frac{\delta^2 m}{4\sigma^2}
\exp\bigg(-\frac{\delta^2}{\sigma^2}\bigg).   
\]
The result follows by plugging in the specified value for $\delta$. 
\end{proof}

\begin{lemma}
\label{lem:part_tree}
Let $G$ be a tree with maximum degree $d_{\mathrm{max}}$, and
$k \in \{1,\ldots,n\}$ be arbitrary. Then there exists a partition as
in Lemma \ref{lem:part_lower}, $s=m-1$, and  
\[
k \leq \min_{i=1,\ldots,m} \; |S_i| \leq k(d_{\mathrm{max}}+1).
\]
\end{lemma}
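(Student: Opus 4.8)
The plan is to build the partition greedily by carving off connected subtrees of size between $k$ and $k(d_{\mathrm{max}}+1)$, one at a time, until the remainder is small, and then absorb the leftover into a neighboring block. First I would root the tree $G$ at an arbitrary node and, for each node $v$, let $n(v)$ denote the size of the subtree rooted at $v$. I would then look for a node $v$ deepest in the tree (or any node minimal under the ancestor order) such that its subtree has size $n(v) \geq k$; by minimality, each child subtree of $v$ has size at most $k-1$, so $n(v) \leq 1 + d_{\mathrm{max}}(k-1) \leq k(d_{\mathrm{max}}+1)$. Split off $S = $ (the subtree rooted at $v$) as one partition element, delete it from $G$, and repeat on what remains. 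Each split removes exactly one edge (the edge from $v$ to its parent) from the set of ``within-block'' edges, so after $m-1$ splits we have $m$ blocks and exactly $m-1$ crossing edges, giving $s = m-1$ as claimed.

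The subtlety is the termination step: the recursion stops when the remaining tree has fewer than $k$ nodes, and that final remainder cannot stand alone as a block since it may violate the lower bound $|S_i| \geq k$. The fix is to not make it its own block but instead merge it with the last block that was carved off adjacent to it — concretely, when we split off the subtree rooted at $v$ and the residual tree (the component containing $v$'s former parent) has size less than $k$, we instead take $S$ to be that subtree {\it together with} the residual, i.e.\ we absorb the small leftover. I would need to check this merged block still has size at most $k(d_{\mathrm{max}}+1)$: the subtree rooted at $v$ has size $\leq k(d_{\mathrm{max}}+1)$ and the residual has size $< k$, so the union could in principle be as large as roughly $k(d_{\mathrm{max}}+2)$, which slightly overshoots. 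To stay within the stated bound I would instead choose $v$ more carefully at the last step, or run the absorption the other way (keep the residual as a ``stub'' attached to whichever block touches it and verify the combined bound), possibly tightening the earlier split criterion to leave a safety margin. Handling this bookkeeping cleanly — ensuring every block, including the one that absorbs the remainder, lies in $[k, k(d_{\mathrm{max}}+1)]$ — is the main obstacle.

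An alternative, perhaps cleaner, route avoids an explicit remainder: instead of repeatedly deleting, process the tree bottom-up and whenever an accumulating ``pending'' subtree at a node reaches size $\geq k$, freeze it as a block and pass an empty subtree up to the parent. Since children are merged one at a time and each child's pending mass is $< k$ before it triggers a freeze, the frozen block has size at most $1 + (\text{at most } d_{\mathrm{max}} \text{ children, each contributing} < k) \le 1 + d_{\mathrm{max}}(k-1) \le k(d_{\mathrm{max}}+1)$; the only node that can finish with pending mass below $k$ is the root, and its leftover is attached to the block frozen by one of its children. Either way, the two facts to verify are: (i) the crossing-edge count equals $m-1$ because $G$ is a tree and the blocks are connected (a partition of a tree into $m$ connected pieces always cuts exactly $m-1$ edges), and (ii) the size bounds hold block by block. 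I expect (i) to be immediate and (ii) to be the step requiring the care described above.
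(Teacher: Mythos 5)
Your first route is exactly the paper's proof: repeatedly carve off a minimal (by containment) one-cut subtree of size at least $k$, and when the residual drops below $k$ nodes, absorb it into the last carved block; the crossing-edge count $s=m-1$ is immediate because a partition of a tree into $m$ connected blocks cuts exactly $m-1$ edges. The ``main obstacle'' you flag at the merge step is not actually an obstacle, and you already have the fact that resolves it: your own minimality argument gives each carved block size at most $1 + d_{\mathrm{max}}(k-1) \le k\,d_{\mathrm{max}}$ (since $d_{\mathrm{max}} \ge 1$), not merely $k(d_{\mathrm{max}}+1)$. Absorbing a residual of size at most $k-1$ therefore yields a merged block of size at most $k\,d_{\mathrm{max}} + k - 1 < k(d_{\mathrm{max}}+1)$, so no re-engineering of the last step is needed --- the slack of $k$ between $k\,d_{\mathrm{max}}$ and $k(d_{\mathrm{max}}+1)$ is precisely what the $+1$ in the stated bound is there to accommodate, and this is exactly how the paper closes the argument (its bound on each carved piece is $|S_i'| \le k\,d_{\mathrm{max}}$, proved by the same ``otherwise a child subtree would be a smaller valid choice'' reasoning). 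You only got into trouble by discarding your tight intermediate bound in favor of the loose target bound before doing the union. With that one substitution your proof is complete.
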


\begin{proof}
Our proof proceeds inductively. We begin by constructing
$S_1^{\prime}$,   
the smallest subtree among all those having size at least $k$, and
generated by a cut of size 1 (i.e., separated from the graph by the
removal of 1 edge). Note that \smash{$|S_1^{\prime}| \leq k
  d_{\mathrm{max}}$}, because if not then $S_1^{\prime}$ has at 
least $k$ internal nodes, and we can remove its root to produce
another subtree whose size is smaller but still at least $k$.

For the inductive step, assume 
$S_1^{\prime},\ldots, S_\ell^{\prime}$ have been constructed.   
We consider two cases. (For a subgraph $G^{\prime}$ of $G$, we
denote by $G - G^{\prime}$ the complement subgraph, given by removing
all nodes in $G^{\prime}$, and all edges incident to a node in 
$G^{\prime}$.) 

\smallskip\smallskip
\noindent
{\bf Case 1.} If \smash{$|G  - \cup_{i=1}^{\ell} S_i^{\prime}| > k$},
then we construct $S_{\ell+1}^{\prime}$, the smallest subtree of 
\smash{$G - \cup_{i=1}^{l} S_i^{\prime}$} among all those having size
at least $k$, and generated by a cut of size 1.  As before, we obtain 
that \smash{$|S_{\ell+1}^{\prime}| \leq kd_{\mathrm{max}}$}.  
 
\smallskip\smallskip
\noindent
{\bf Case 2.} If \smash{$|G  - \cup_{i=1}^{\ell} S_i^{\prime}| \leq
  k$}, then the process is stopped.  We define $S_i=S_i^{\prime}$, 
$i=1,\ldots,\ell-1$, as well as 
\smash{$S_\ell=S_\ell^{\prime} \cup (G-\cup_{i=1}^{\ell}
  S_i^{\prime})$}. With $m=\ell$, the result follows.
\end{proof}

We now demonstrate a more precise characterization of the lower bound
in Theorem \ref{thm:bv_minimax}, from which the result in the theorem
can be derived.

\begin{theorem}
\label{thm:bv_lower}
Let $G$ be a tree with maximum degree $d_{\mathrm{max}}$. Then
\[
\inf_{\htheta} \; \sup_{\theta_0 \in \BV_G(t)} \; 
\E \| \htheta - \theta_0 \|_2^2 \geq \frac{t^2}{4 e \sigma^2 n}  
\Bigg(\bigg(\frac{\sigma n}{2t(d_{\max} + 1)}\bigg)^{2/3} -
 1\Bigg)^2.  
 \]
\end{theorem}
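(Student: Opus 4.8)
The plan is to prove this by combining the two preliminary lemmas above with a single scalar optimization over the partition parameter. Fix an integer $k\in\{1,\ldots,n\}$, to be chosen at the end. By Lemma \ref{lem:part_tree} there is a partition $S_1,\ldots,S_m$ of the vertices of the tree $G$ with cut size $s=m-1$ and $k\le\min_i|S_i|\le k(d_{\max}+1)$. Since the blocks are disjoint and cover all $n$ vertices, the size bounds immediately yield $n/(k(d_{\max}+1))\le m\le n/k$. Feeding this partition into Lemma \ref{lem:part_lower} (with $s=m-1$) gives
\[
\inf_{\htheta}\ \sup_{\theta_0\in\BV_G(t)}\ \E\|\htheta-\theta_0\|_2^2\ \geq\ \frac{km\,t^2}{4\sigma^2(m-1)^2}\exp\left(-\frac{k\,t^2}{\sigma^2(m-1)^2}\right).
\]

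Next I would turn the right-hand side into a bound in $k$ alone using elementary inequalities. In the prefactor, $(m-1)^2\le m^2$ gives $km/(m-1)^2\ge k/m\ge k^2/n$ (using $m\le n/k$). In the exponent, $(m-1)^2\ge m^2/4$ (valid once $m\ge 2$) gives $k t^2/(\sigma^2(m-1)^2)\le 4k t^2/(\sigma^2 m^2)\le 4k^3 t^2(d_{\max}+1)^2/(\sigma^2 n^2)$ (using $m\ge n/(k(d_{\max}+1))$), so that, since $\exp(-\cdot)$ is decreasing,
\[
\inf_{\htheta}\ \sup_{\theta_0\in\BV_G(t)}\ \E\|\htheta-\theta_0\|_2^2\ \geq\ \frac{k^2 t^2}{4\sigma^2 n}\exp\left(-\frac{4k^3 t^2(d_{\max}+1)^2}{\sigma^2 n^2}\right).
\]
Now set $k^\ast=\bigl(\sigma n/(2t(d_{\max}+1))\bigr)^{2/3}$, which is exactly the value at which the exponent equals $1$, and take $k=\lfloor k^\ast\rfloor$. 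Then $k\le k^\ast$ forces the exponential factor to be at least $e^{-1}$, while $k\ge k^\ast-1\ge 0$ forces $k^2\ge(k^\ast-1)^2$; substituting $k^\ast$ back in produces precisely the claimed bound $\frac{t^2}{4e\sigma^2 n}\bigl((\sigma n/(2t(d_{\max}+1)))^{2/3}-1\bigr)^2$.

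The main obstacle is not the algebra, which is routine, but the bookkeeping at the boundary: one must check that $k=\lfloor k^\ast\rfloor$ actually lands in $\{1,\ldots,n\}$ and, more importantly, that the induced $m$ satisfies $m\ge 2$, so that $s=m-1\ge 1$ and the divisions by $s^2$ in Lemma \ref{lem:part_lower} are legitimate and the step $(m-1)^2\ge m^2/4$ is valid. This is exactly the regime where the hypothesis "$n/(t d_{\max})$ large" feeds into the downstream Theorem \ref{thm:bv_minimax}; at the level of Theorem \ref{thm:bv_lower} itself, one can either restrict to that regime or observe that when $k^\ast\le 1$ the asserted inequality is weak enough to follow from a trivial two-point comparison at a leaf of the tree. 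The genuinely substantive content is already isolated in Lemma \ref{lem:part_tree}---the existence of a tree partition that is simultaneously balanced from below ($|S_i|\ge k$) and from above ($|S_i|\le k(d_{\max}+1)$) while keeping the cut size minimal ($s=m-1$); granting that, Theorem \ref{thm:bv_lower} is just the scalar balancing computation sketched above.
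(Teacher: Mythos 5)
Your argument is correct and follows the paper's proof essentially verbatim: the same two lemmas, the same choice $k=\lfloor(\sigma n/(2t(d_{\max}+1)))^{2/3}\rfloor$, and the same elementary bounds $m\le n/k$, $m\ge n/(k(d_{\max}+1))$, and $m^2/(m-1)^2\le 4$ before invoking $k\ge k^\ast-1$. If anything, your explicit use of $m\le n/k$ to obtain the prefactor $k^2/n$ is slightly cleaner than the paper's displayed chain, whose fourth line cites only $m\le n$ and then silently acquires the extra factor of $k$ in the final line.
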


\begin{proof}
Set $s = m-1$ and 
\[
k = \floor*{\bigg(\frac{\sigma n}{2t(d_{\mathrm{max}} +  
    1)}\bigg)^{2/3}}.   
\] 
By Lemmas \ref{lem:part_lower} and \ref{lem:part_tree},
\begin{align*}
\inf_{\htheta} \; \sup_{\theta_0 \in \BV_G(t)} \; 
\E \| \htheta - \theta_0 \|_2^2  &\geq
 \frac{km t^2}{4\sigma^2 s^2}
\exp\bigg( - \frac{kt^2}{\sigma^2 s^2} \bigg) \\   
&\geq \frac{kt^2}{4\sigma^2 m}
\exp\bigg( - \frac{k t^2}{\sigma^2 (m-1)^2} \bigg) \\  
&\geq \frac{k t^2}{4\sigma^2 m}
\exp\bigg( - \frac{t^2 k^3(d_{\mathrm{max}} 
  +1)^2}{\sigma^2 n^2} \frac{m^2}{(m-1)^2} \bigg) \\  
&\geq \frac{k t^2}{4\sigma^2 n}
\exp\bigg( - \frac{4t^2 k^3(d_{\mathrm{max}}  
  +1)^2}{\sigma^2 n^2} \bigg) \\ 
&\geq \frac{k^2 t^2}{4\sigma^2 n} \exp(-1).
\end{align*}
In the above, the third line uses \smash{$n/m \leq k
  d_{\mathrm{max}}$} as given by Lemma \ref{lem:part_tree}, the
fourth line simply uses $m \leq n$ and $m^2/(m-1)^2 \leq 4$ (as $m
\geq 2$), and the last line uses the definition of $k$.  Thus, because 
\[
k \geq \bigg(\frac{\sigma n}{2t(d_{\max} + 1)}\bigg)^{2/3} - 1,
\]
we have established the desired result.
\end{proof}

\subsection{Proof of Theorem \ref{thm:bd_minimax}}
\label{app:bd_minimax}

First we establish that, as $G$ is a tree, the number of nodes of
degree at most $2$ is at least $n/2$. Denote by $d_i$ be the 
degree of the node $i$, for each $i=1,\ldots,n$. Then
\[
2(n-1) = \sum_{i=1}^n d_i =  \sum_{i \,:\, d_i \leq 2} d_i + 
  \sum_{i \,:\, d_i \geq 3} d_i \geq  |\{i : d_i \leq 2\}| 
  + 3 |\{ i: d_i \geq  3 \}| = 3n - 2  |\{ i: d_i \leq 2 \}|.    
\]
Hence, rearranging, we find that 
\smash{$|\{ i: d_i \leq 2 \}| \geq n/2+1$}.  

Let $\cI = \{ i: d_i \leq 2 \}$ so that $|\cI| \geq \lceil n/2 \rceil$  
and stipulate that $|\cI|$ is even without loss of generality. 
Let $k$ be the largest even number such that $k \leq s/2$. Define  
\[
\cB = \{ z \in \R^n: z_{\cI} \in \{-1,0,+1\}^{|\cI|}, \; z_{\cI^c} =
0, \; \| z \|_0 = k\}.
\]
Note that by construction \smash{$\cB \subseteq \BD_G(s)$}.

Assume $s \leq n/6$.  Then this implies 
$k/2 \leq n/6 \leq |\cI|/3$.  By Lemma 4 in
\cite{raskutti2011minimax}, there exists 
\smash{$\tilde\cB \subseteq \cB$} such that 
\[
\log|\tilde\cB| \geq \frac{k}{2} 
\log \bigg( \frac{|\cI|-k}{k/2} \bigg),
\] 
and $\|z - z'\|_2^2 \geq k/2$ for all \smash{$z,z' \in \tilde\cB$}. 
Defining \smash{$\cB_0 = 2 \delta \tilde\cB$},
for $\delta > 0$ to be specified shortly, we now have $\|z - 
z'\|_2^2 \geq 2 \delta^2 k$ for all $z,z' \in \cB_0$. 

For $\theta \in \cB_0$, let us consider comparing the measure
$P_\theta = N(\theta,\sigma^2 I)$ against $P_0 = N(0,\sigma^2 I)$: the
KL divergence between these two satisfies $K(P_\theta || P_0) = 
\|\theta\|_2^2/\sigma^2 = 2\delta^2k/\sigma^2$.  Let 
\smash{$\delta = \sqrt{ \alpha\sigma^2/(2k) \log|\cB_0| }$}, for a
parameter $\alpha < 1/8$ that we will specify later.  We have 
\[
\frac{1}{|\cB_0|} \sum_{\theta \in \cB_0}  K(P_\theta || P_0)
\leq \alpha \log |\cB_0|. 
\]
Hence by Theorem 2.5 in \cite{tsybakov2009introduction}, 
\begin{equation}
\label{eq:tsybakov_bd}
\inf_{\htheta} \; \sup_{\theta_0 \in \BD_G(s)} \; 
\P( \| \htheta - \theta_0 \|_2^2 \geq \delta^2 k) \geq
\frac{\sqrt{|\cB_0|}}{1 + \sqrt{|\cB_0|}} \Bigg( 1 - 2\alpha -
  \sqrt{\frac{2\alpha}{\log |\cB_0|}} \Bigg).  
\end{equation}
It holds that
\[
\delta^2 k = \frac{\alpha\sigma^2}{2} \log|\cB_0| 
\geq \frac{\alpha\sigma^2 k}{4} \log 
\frac{|\cI| - k}{k/2} \geq C \sigma^2 s 
\log\bigg(\frac{n}{s}\bigg),    
\]
for some constant $C>0$ depending on $\alpha$ alone.  Moreover, the
right-hand side in \eqref{eq:tsybakov_bd} can be lower bounded by
(say) $1/4$ by  taking
$\alpha$ to be small enough and assuming $n/s$ is large enough.
Thus we have established 
\[
\inf_{\htheta} \; \sup_{\theta_0 \in \BD_G(s)} \; 
\P\Bigg( \| \htheta - \theta_0 \|_2^2 \geq  
C \sigma^2 s 
\log\bigg(\frac{n}{s}\bigg) \Bigg) \geq \frac{1}{4},  
\] 
and the result follows by Markov's inequality.
	
\bibliographystyle{abbrvnat}
\bibliography{graphfused}

\end{document}